\def\th@plain{\slshape}
\patchcmd{\th@remark}{\itshape}{\slshape}{}{}\makeatother
\newcounter{bidon}
\newcommand{\rdb}{\refstepcounter{bidon}}
\newtheorem{theorem}{Théorème}[section]
\newtheorem{lemma}[theorem]{Lemme}
\newtheorem{plcc}[theorem]{Principe local-global concret}
\theoremstyle{definition}
\newtheorem{definition}[theorem]{Définition}
\newtheorem{definota}[theorem]{Définition et notation}
\theoremstyle{remark}
\newcounter{MF}
\newcommand\stMF{\stepcounter{MF}}
\newcommand{\lec}{\stMF\ifodd\value{MF}lecteur\xspace\else 
lectrice\xspace\fi}
\newcommand{\lecs}{\stMF\ifodd\value{MF}lecteurs\xspace\else 
lectrices\xspace\fi}
\newcommand{\alec}{\stMF\ifodd\value{MF}au lecteur\xspace\else%
à la lectrice\xspace\fi}
\newcommand{\dlec}{\stMF\ifodd\value{MF}du lecteur\xspace\else%
de la lectrice\xspace\fi}
\newcommand{\llec}{\stMF\ifodd\value{MF}le lecteur\xspace\else la lectrice\xspace\fi}
\newcommand{\Llec}{\stMF\ifodd\value{MF}Le lecteur\xspace\else La lectrice\xspace\fi}
\newcommand{\lui}{\ifodd\value{MF}lui\xspace\else
elle\xspace\fi}
\newcommand{\celui}{\ifodd\value{MF}celui\xspace\else
celle\xspace\fi}
\newcommand{\ceux}{\ifodd\value{MF}ceux\xspace\else
celles\xspace\fi}
\newcommand{\er}{\ifodd\value{MF}er\xspace\else
ère\xspace\fi}
\newcommand{\eux}{\ifodd\value{MF}eux\xspace\else
elles\xspace\fi}
\newcommand{\eUx}{\ifodd\value{MF}eux\xspace\else
euse\xspace\fi}
\newcommand{\eUX}{\ifodd\value{MF}eux\xspace\else
euses\xspace\fi}
\newcommand{\leux}{\ifodd\value{MF}leux\xspace\else
leuse\xspace\fi}
\newcommand{\il}{\ifodd\value{MF}il\xspace\else
elle\xspace\fi}
\newcommand{\ien}{\ifodd\value{MF}ien\xspace\else
ienne\xspace\fi}
\newcommand{\e}{\ifodd\value{MF}\xspace \else e\xspace\fi}
\newcommand{\n}{\ifodd\value{MF}n\xspace\else nne\xspace\fi}
\newcommand{\la}{\@ifstar{\ifodd\value{MF}le\else
la\fi}{\stMF\ifodd\value{MF}le\else la\fi}}
\newcommand\note{\rdb
\noi{\sl Note. }}
\newcommand \thref[1] {théorème~\ref{#1}}
\newcommand\oge{\leavevmode\raise.3ex\hbox{$\scriptscriptstyle\langle\!\langle\,$}}
\newcommand\feg{\leavevmode\raise.3ex\hbox{$\scriptscriptstyle\,\rangle\!\rangle$}}
\newcommand\gui[1]{\oge{#1}\feg}
\renewcommand\paragraph[1]{

\medskip \noindent $\bullet$ \textbf{#1}}
\newcommand \recu {récurrence\xspace}
\newcommand \cad {c'est-à-dire\xspace}
\newcommand \ssi {si, et seulement si, }
\newcommand \spdg {sans perte de généralité\xspace}
\newcommand \Propeq {Les propriétés suivantes sont 
équivalentes.}
\newcommand \Kev {$\gK$-\evc}
\newcommand \Lev {$\gL$-\evc}
\newcommand \klg {$\gk$-\alg}
\newcommand \klgs {$\gk$-\algs}
\newcommand \Klg {$\gK$-\alg}
\newcommand \Klgs {$\gK$-\algs}
\newcommand \Amo {$\gA$-module\xspace}
\newcommand \Bmo {$\gB$-module\xspace}
\newcommand \kmo {$\gk$-module\xspace}
\newcommand \Ali {application $\gA$-\lin}
\newcommand \Alis {applications $\gA$-\lins}
\newcommand \kli {application $\gk$-\lin}
\newcommand \Bli {application $\gB$-\lin}
\newcommand \Blis {applications $\gB$-\lins}
\newcommand \agN {\alg de Newton\xspace}
\newcommand \agq{algébrique\xspace}
\newcommand \agqs{algébriques\xspace}
\newcommand \alg {algèbre\xspace}
\newcommand \algs {algèbres\xspace}
\newcommand \ali {application \lin}
\newcommand \apf {\alg \pf}
\newcommand \cara{caractéristique\xspace}
\newcommand \carn{caractérisation\xspace}
\newcommand \cdi{corps discret\xspace}
\newcommand \cdis{corps discrets\xspace}
\newcommand \com {comaximaux\xspace}
\newcommand \demo{démonstration\xspace}     
\newcommand \demos{démonstrations\xspace}
\newcommand \dfn{définition\xspace}  
\newcommand \dfns{définitions\xspace}
\newcommand \dile{différentielle\xspace}  
\newcommand \diles{différentielles\xspace}
\newcommand \discri{discriminant\xspace}
\newcommand \dvn {dérivation\xspace}
\newcommand \dvns {dérivations\xspace}
\newcommand \eal {\und {étale}\xspace}
\newcommand \eals {\und {étales}\xspace}
\newcommand \eds {extension des scalaires\xspace}
\newcommand \eco {\elts \com}
\newcommand \egmt {également\xspace}
\newcommand \egt {égalité\xspace}
\newcommand \elr{élémentaire\xspace}
\newcommand \elt{élément\xspace}  
\newcommand \elts{éléments\xspace}
\newcommand\evc{espace vectoriel\xspace}
\newcommand \eqvc {équivalence\xspace}
\newcommand \fit {fidèlement\xspace}
\newcommand \fpte {\fit plate\xspace}
\newcommand\gnle{générale\xspace}
\newcommand \homo {homomorphisme\xspace}
\newcommand \idm {idempotent\xspace}
\newcommand \idms {idempotents\xspace}
\newcommand \ird {irréductible\xspace}
\newcommand \iso {isomorphisme\xspace}
\newcommand \itf {idéal \tf}
\newcommand \iv {inversible\xspace}
\newcommand \lgbe {locale-globale\xspace}
\newcommand \lin {linéaire\xspace}
\newcommand \lins {linéaires\xspace}
\newcommand \mo {mono\"{\i}de\xspace}
\newcommand \pf {de \pn finie\xspace}
\newcommand \pn {présentation\xspace}
\newcommand \pol {polynôme\xspace}
\newcommand \pols {polynômes\xspace}
\newcommand \polu {\pol \unt}
\newcommand \poll{polynomial\xspace}
\newcommand \polmin {\pol minimal\xspace}
\newcommand \prmt {précisément\xspace}
\newcommand \pro {projectif\xspace}
\newcommand \pros {projectifs\xspace}
\newcommand \ptf {\pro \tf}
\newcommand \ptfs {\pros \tf}
\newcommand \sfio {système fondamental d'\idms orthogonaux\xspace}
\newcommand \spb {séparable\xspace}  
\newcommand \spl {séparable\xspace}  
\newcommand \spls {séparables\xspace}
\newcommand \stf {strictement fini\xspace}
\newcommand \stfe {strictement finie\xspace}
\newcommand \stfes {strictement finies\xspace}
\newcommand \syp {\sys \poll}
\newcommand \sys {système\xspace}
\newcommand \tf {de type fini\xspace}
\newcommand \tho {théorème\xspace}
\newcommand \unt {unitaire\xspace}
\newcommand \uvle {univer\-selle\xspace}
\newcommand \zed {z\'{e}ro-dimensionnel\xspace}
\newcommand \zede {z\'{e}ro-dimensionnelle\xspace}
\newcommand \zeds {z\'{e}ro-dimensionnels\xspace}
\newcommand \zedr {\zed réduit\xspace}
\newcommand \zedre {\zede réduite\xspace}
\newcommand \zedrs {\zeds réduits\xspace}
\newcommand \cof {constructif\xspace}
\newcommand \covs {constructives\xspace}
\DeclareMathAlphabet{\mathpzc}{OT1}{pzc}{m}{it}
\patchcmd{\sectionmark}{\MakeUppercase}{}{}{}
\begin{document}

\stMF

\title{Un \tho décisif}
\author{H. Lombardi}
\maketitle

\begin{abstract} \label{abstract}
Nous donnons une \demo \elr du \tho selon lequel toute \alg nette sur un \cdi est étale, de dimension finie comme \evc et traciquement  étale.
\end{abstract}

\medskip \noindent {\bf Mots clés.}  Algèbre traciquement étale. Algèbre étale.  Algèbre nette. Algèbre nette sur un corps discret. Mathématiques constructives.

\medskip \noindent 
MSC 2010: 13C10 (13C15, 13C11, 14B25, 03F65)

\tableofcontents


\section*{Introduction} 
Nous donnons dans cet article une \demo \elr du \tho selon lequel une \alg nette sur un \cdi est étale, de dimension finie et traciquement  étale.

Les \dfns et résultats des sections \ref{secEtaleSurCD} et \ref{sec-dilesKhaler} sont tirés de \cite[Chapitre VI]{ACMC}, ou de la version anglaise \cite[Chapter VI]{CACM}. La section \ref{subsecLNE} donne quelques \dfns et résultats classiques concernant les \algs \pf nettes, lisses ou étales. La section \ref{secthdecisif} établit le \tho fondamental visé.

Comme dans \cite{ACMC} et \cite{CACM}, les \demos ici sont \covs et susceptibles d'être implémentées en Calcul formel.

La démonstration du résultat dans l'article présent est assez proche de celle donnée dans \cite{ACMC} après le \tho VI-6.18, laquelle simplifie la démonstration du corollaire VI-6.15 dans \cite{ACMC} ou \cite{CACM}.  

\smallskip Dans l'article présent $\gk$ désigne un anneau commutatif arbitraire et $\gK$, $\gL$ des \cdis, \cad des anneaux commutatifs non nuls dans lequel tout \elt est nul ou \iv.  

Sur un \cdi, les procédures usuelles de l'\alg \lin sont explicites.

Une extension \agq $\gL$ de $\gK$ est dite \textsl{finie} si c'est un \Kev \tf, \textsl{\stfe} si c'est un \Kev qui possède une base finie.

\section{Algèbres \eals sur un \cdi} \label{secEtaleSurCD}

\begin{definition}\label{defi1Etale}%
Soit~$\gk$ un anneau commutatif et~$\gA$ une \klg. 
On suppose que $\gA$ est un \kmo libre de rang fini $>0$.
\begin{enumerate}
\item L'\alg~$\gA$ est dite  \textsl{traciquement étale}  si
 le \discri $\Disc_{\gA/\gk}$ est \iv. Nous abrégeons
\gui{traciquement étale} en \textsl{\eal} (souligné). 
\end{enumerate}%
Considérons maintenant le cas d'une \alg \stfe sur un \cdi $\gL$%
\begin{enumerate}\setcounter{enumi}{1}
\item Un \elt de~$\gA$ est dit  \textsl{\spl  (sur~$\gL$)} s'il annule un \pol \spl.
\item L'\alg~$\gA$ est dite  \textsl{\agq \spb  (sur~$\gL$)} si tout \elt de~$\gA$ est  \spl sur~$\gL$.
\end{enumerate}%
\end{definition}

\begin{lemma} \label{lem-sanscarre}
Soit $f$ un \polu de~$\gL[X]$ et $\Lx=\aqo{\gL[X]}{f}$ l'\alg 
quotient.
\begin{enumerate}
\item $\Lx$ est \eal \ssi $f$ est \spl.
\item  $\Lx$ est réduite \ssi $f$ est sans facteur carré.
\end{enumerate}
\end{lemma}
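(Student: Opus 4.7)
The plan is to handle each item separately, with item 1 being a direct consequence of the discriminant formula for the power basis and item 2 reducing to a gcd manipulation in $\gL[X]$.

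For item 1, recall that $\gL[x]$ is a free $\gL$-module with basis $1,x,\dots,x^{n-1}$ where $n=\deg f$, so $\Disc_{\gL[x]/\gL}$ makes sense according to Definition~\ref{defi1Etale}. The key fact, which is the classical formula established earlier in the chapter on discriminants (section \ref{secEtaleSurCD}), is that in this power basis one has $\Disc_{\gL[x]/\gL} = \disc(f)$ up to sign. Item 1 then follows immediately: $\gL[x]$ is \eal iff $\Disc_{\gL[x]/\gL}$ is invertible in $\gL$, iff $\disc(f)$ is invertible, which is precisely the definition of $f$ being \spl.

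For item 2, the direction $\Rightarrow$ is proved by contraposition. If $f = g^2 h$ with $g$ non-constant, let $y = \overline{gh} \in \gL[x]$. A degree count gives $\deg(gh) = \deg g + \deg h < 2\deg g + \deg h = \deg f$, so $y \neq 0$ in the quotient. But $(gh)^2 = g^2h \cdot h = fh \equiv 0 \pmod f$, so $y^2 = 0$. Hence $\gL[x]$ is not reduced.

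For the converse $\Leftarrow$, suppose $f$ is without square factor and let $p \in \gL[X]$ be such that $\overline p$ is nilpotent in $\gL[x]$; we must show $\overline p = 0$. There is $k$ with $f \mid p^k$. Set $d = \gcd(f,p)$ (computable by Euclidean division in $\gL[X]$) and write $f = d f_1$, $p = d p_1$ with $\gcd(f_1,p_1) = 1$. From $df_1 \mid d^k p_1^k$ we obtain $f_1 \mid d^{k-1} p_1^k$, and coprimality with $p_1^k$ yields $f_1 \mid d^{k-1}$. The main observation is then that $\gcd(f_1,d) = 1$: otherwise a non-constant common divisor $e$ of $f_1$ and $d$ would satisfy $e^2 \mid f_1 d = f$, contradicting the hypothesis. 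Combining $f_1 \mid d^{k-1}$ with $\gcd(f_1,d) = 1$ forces $f_1$ to be a unit, so $f$ divides $d$ and hence $p$, giving $\overline p = 0$.

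The only mildly delicate step is the final coprimality argument in item 2; everything else is either the cited discriminant formula or a direct degree computation.
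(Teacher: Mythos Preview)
The paper states this lemma without proof (it is one of the preliminary results quoted from \cite[Chapitre VI]{ACMC}), so there is no in-paper argument to compare against. Your proof is correct on both items and is the standard route: item~1 via the identity $\Disc_{\gL[x]/\gL}=\pm\disc(f)$ in the power basis, and item~2 via a gcd argument in the Euclidean ring $\gL[X]$.

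Two minor remarks. First, your pointer to ``section~\ref{secEtaleSurCD}'' for the discriminant formula is misplaced: that section in the present paper does not prove the formula, it merely contains the lemma you are proving; the identity $\Disc_{\gL[x]/\gL}=\pm\disc(f)$ should be cited from the source \cite{ACMC} (or simply stated as the classical computation with the trace form on the power basis). Second, in the $\Leftarrow$ direction of item~2 you could shorten the argument by noting that it suffices to treat $k=2$: if $f\mid p^2$ implies $f\mid p$, then by induction $f\mid p^k$ implies $f\mid p$ for all $k$. Your general-$k$ version is of course fine, and the key step --- that $\gcd(f_1,d)=1$ because a common nonconstant factor would yield a square factor of $f$ --- is exactly right.
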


\begin{lemma}\label{lemma1Etale} Soit~$\gB\supseteq\gK$ une \alg \stfe.
\begin{enumerate}
\item \label{i1fact1Etale} L'\alg~$\gB$ est \zede\footnote{I.e., pour tout \elt $x$, il existe $n\in\NN$ et $y\in\gB$ tels que $x^n(1-yx)=0$.}. Si $\gB$ est réduite, pour tout $a\in\gB$ il existe un unique \idm $e\in\gK[a]$ tel que~$\gen{a}=\gen{e}$. En outre, lorsque $e=1$, \cad lorsque $a$ est \iv, $a^{-1}\in\gK[a]$.
\item \label{i2fact1Etale} \Propeq
\begin{enumerate}
\item $\gB$ est un \cdi.
\item $\gB$ est sans diviseur de zéro: $xy=0\Rightarrow (x=0$ \textsl{ou} $y=0)$.
\item $\gB$ est connexe\footnote{I.e., tout \idm est égal à $0$ ou $1$.} et réduite.
\item Le \polmin sur~$\gK$ de n'importe quel \elt de~$\gB$ est
\ird.
\end{enumerate}
\item \label{i3fact1Etale} Si~$\gK\subseteq\gL\subseteq\gB$ et si~$\gL$ est 
un \cdi \stf sur~$\gK$, alors~$\gB$ est \stfe sur~$\gL$. En outre,~$\gB$ 
est \eal sur~$\gK$ \ssi elle est \eal
sur~$\gL$ et~$\gL$ est \eal sur~$\gK$.
\item \label{i4fact1Etale} Si $(e_1,\ldots,e_r)$ est un \sfio de~$\gB$, 
$\gB$ est \eal sur
$\gK$ \ssi chacune des compo\-santes~$\gB [1/e_i]$
est \eal sur~$\gK$.
\item \label{i5fact1Etale} Si~$\gB$ est \eal elle est réduite.
\item \label{i6fact1Etale} Si $\car(\gK) >\dex{\gB:\gK}$
et si~$\gB$ est réduite, elle est \eal.
\end{enumerate}
\end{lemma}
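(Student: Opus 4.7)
Mon plan se déroule en traitant les six points dans l'ordre où ils sont énoncés, en exploitant systématiquement le fait que $\gK$ est un \cdi et donc que l'algèbre linéaire explicite (pivot de Gauss, calcul de \polmin, etc.) est disponible dans $\gB$.

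\textbf{Point 1 : zéro-dimensionnalité et \idms.} Pour $a\in\gB$, je considère son \polmin $P\in\gK[X]$ sur $\gK$. Comme $\gK$ est discret, je peux écrire $P(X)=X^k Q(X)$ avec $Q(0)\neq 0$. L'\egt $a^kQ(a)=0$ donne, en posant $y=(Q(0)-Q(a))/(a\cdot Q(0))$ (bien défini car $Q(X)-Q(0)$ est multiple de $X$), la relation $a^k(1-ay)=0$ avec $y\in\gK[a]$, d'où la zéro-dimensionnalité. Si $\gB$ est réduite, par itération $a^k=a^{k+j}y^j$ pour tout $j$, et l'\elt $e:=a^ky^k\in\gK[a]$ vérifie $e^2=e$ et $\gen{a}=\gen{e}$ ; l'unicité résulte de ce qu'un \idm de $\gK[a]$ engendrant $\gen{a}$ est entièrement déterminé par la relation $ea=a$ dans un anneau réduit. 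Enfin, lorsque $a$ est \iv, $a^k$ l'est donc, et $a^k(1-ay)=0$ donne $ay=1$, d'où $a^{-1}=y\in\gK[a]$.

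\textbf{Point 2 : \eqvcs.} Les implications (a)$\Rightarrow$(b)$\Rightarrow$(c) sont \imdes ($\gB$ sans \dvz est réduit, et tout \idm $e$ vérifie $e(1-e)=0$). Pour (c)$\Rightarrow$(a), j'utilise le point 1 : pour tout $a\in\gB$ il existe un \idm $e\in\gK[a]$ avec $\gen{a}=\gen{e}$ ; par connexité, $e=0$ ou $e=1$, soit $a=0$ soit $a$ \iv, donc $\gB$ est un \cdi. L'implication (a)$\Rightarrow$(d) est le raisonnement classique par minimalité dans un corps, et (d)$\Rightarrow$(a) découle de ce qu'un \polmin \ird $P$ de $a$ satisfait $P(0)\neq 0$ dès que $a\neq 0$, ce qui permet d'exhiber l'inverse de $a$ à partir de la relation $P(a)=0$.

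\textbf{Points 3 et 4 : stabilité et \rcm.} Pour \ref{i3fact1Etale}, la stricte finitude de $\gB$ sur $\gL$ résulte de l'algèbre linéaire explicite sur le \cdi $\gL$ : à partir d'une base finie de $\gB$ sur $\gK$, on extrait par pivot une base sur $\gL$. L'\eqvc pour les \discris repose sur la formule classique de transitivité $\Disc_{\gB/\gK}=\mathrm{N}_{\gL/\gK}(\Disc_{\gB/\gL})\cdot\Disc_{\gL/\gK}^{[\gB:\gL]}$ dans $\gK$, et sur le fait que dans un \cdi un produit est \iv \ssi chaque facteur l'est (pour le sens direct, si $\Disc_{\gB/\gK}$ est \iv alors chacun des facteurs est non nul et donc \iv). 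Pour \ref{i4fact1Etale}, si $\gB=\prod_i\gB[1/e_i]$ via le \sfio, la forme trace se diagonalise par blocs dans une base adaptée, donc $\Disc_{\gB/\gK}=\prod_i\Disc_{\gB[1/e_i]/\gK}$ (à un carré de changement de base près qui est \iv), et on conclut comme ci-dessus.

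\textbf{Points 5 et 6 : réduction et critère de \cara.} Pour \ref{i5fact1Etale}, si $x\in\gB$ est nilpotent, alors pour tout $b\in\gB$ l'\elt $xb$ est encore nilpotent, donc de trace nulle ; la ligne correspondant à $x$ dans la matrice de la forme trace est nulle, d'où $\Disc_{\gB/\gK}=0$, en contradiction avec l'étalité. Pour \ref{i6fact1Etale}, je procède par réduction : le point 1 montre que $\gB$ est \zedre ; une famille d'\idms orthogonaux extraite des \polmins d'une base de $\gB$ fournit un \sfio $(e_1,\ldots,e_r)$ tel que chaque $\gB[1/e_i]$ soit connexe et réduite, donc un \cdi $\gL_i$ fini sur $\gK$ par le point 2. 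Par le point 4, il suffit de montrer que chaque $\gL_i$ est \eal sur $\gK$ ; mais $n_i:=[\gL_i:\gK]\leq[\gB:\gK]<\car(\gK)$, donc pour tout $a\in\gL_i$ non nul, $\Tr_{\gL_i/\gK}(a\cdot a^{-1})=n_i\neq 0$ dans $\gK$, ce qui montre la non-dégénérescence de la forme trace sur $\gL_i$.

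L'obstacle principal sera \ref{i3fact1Etale}, où l'établissement rigoureux et \cof de la formule de transitivité du \discri demande un peu de soin dans le choix explicite des bases ; les autres points se plient essentiellement à l'algèbre linéaire sur un \cdi combinée au point 1.
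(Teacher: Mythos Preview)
The paper does not supply its own proof of this lemma; it records the statement and refers to \cite[Chapitre~VI]{ACMC} for the arguments. Your proposal must therefore be judged on its own merits, under the constraint---explicit in the introduction---that the reasoning be constructive.

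Points \ref{i1fact1Etale} through \ref{i5fact1Etale} are fine and follow the expected route. One small remark on point \ref{i1fact1Etale}: your justification of uniqueness is elliptic; the clean statement is that two idempotents generating the same ideal coincide in any commutative ring (from $\gen{e}=\gen{e'}$ one gets $e=ee'=e'$), with no reducedness needed.

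There is, however, a genuine constructive gap in your treatment of point \ref{i6fact1Etale}. You assert that from the minimal polynomials of a basis one extracts a \sfio whose components $\gB[1/e_i]$ are \emph{connected}, hence discrete fields by point \ref{i2fact1Etale}. But decomposing a reduced strictly finite $\gK$-algebra into a product of fields amounts to factoring polynomials into irreducibles, which is not available over an arbitrary \cdi. For instance $\aqo{\gK[X]}{f}$ with $f$ squarefree of degree $<\car(\gK)$ satisfies all the hypotheses, yet splitting it into fields is exactly factoring $f$. Your final step, which invokes $a^{-1}$ inside each component, then collapses.

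The repair is short and avoids any decomposition. Work directly in $\gB$: for $a\neq 0$, point \ref{i1fact1Etale} produces an idempotent $e=ac\in\gK[a]$ with $\gen{a}=\gen{e}$ and $e\neq 0$; then $\Tr_{\gB/\gK}(ac)=\Tr_{\gB/\gK}(e)=\dim_\gK(e\gB)$, an integer in $\{1,\dots,[\gB:\gK]\}$, hence nonzero in $\gK$ since $\car(\gK)>[\gB:\gK]$. This proves nondegeneracy of the trace form. Alternatively, the minimal polynomial of any $a\in\gB$ is squarefree (reducedness) of degree $\leq[\gB:\gK]<\car(\gK)$; an elementary gcd argument (if $g=\pgcd(f,f')$ then $g'=0$, forcing $g$ constant) shows it is separable, and one concludes via the \thref{corlemEtaleEtage}.
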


\begin{theorem}[\carn des \Klgs \eals]\label{corlemEtaleEtage}~
\\
Soit~$\gB$ une \Klg \stfe donnée sous la forme $\Kxn$.
\Propeq
\begin{enumerate}
\item $\gB$ est \eal sur~$\gK$.
\item  Le \polmin sur~$\gK$ de chacun des
$x_i$ est  \spl.
\item $\gB$ est \agq  \spb sur~$\gK$.
\end{enumerate}
En particulier, un corps~$\gL$ qui est une extension galoisienne de~$\gK$
est \eal sur~$\gK$.
\end{theorem}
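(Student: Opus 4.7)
\medskip\noindent\textbf{Plan de démonstration.}
Le plan est d'établir l'équivalence par la chaîne circulaire $(3)\Rightarrow(2)\Rightarrow(1)\Rightarrow(3)$, en concentrant l'essentiel du travail technique sur le passage de l'hypothèse \gui{\polmins \spls des générateurs} à l'invariance du \discri.

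\smallskip
L'implication $(3)\Rightarrow(2)$ est immédiate: chaque $x_i$ appartient à $\gB$, donc annule par hypothèse un \pol \spl $g_i$; le \polmin $m_i$ de $x_i$ divise $g_i$, et un facteur d'un \pol \spl est \spl. Pour $(2)\Rightarrow(1)$, je procéderai par \recu sur $n$. Le cas $n=0$ est trivial. Au rang $n$, posons $\gB'=\gK[\xn_{-1}]$; l'\hdr appliquée à $\gB'$ (dont les générateurs ont les mêmes \polmins que dans $\gB$, donc \spls) fournit $\gB'$ \eal sur $\gK$. D'après le lemme~\ref{lemma1Etale}.5 et~.1, $\gB'$ est réduit et \zed, ce qui permet d'obtenir une décomposition $\gB'=\gL_1\times\cdots\times\gL_s$ en produit fini de \cdis étales sur $\gK$. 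Alors $\gB=\prod_j\gL_j[x_n]$; dans chaque facteur, le \polmin de $x_n$ sur $\gL_j$ divise $m_n$, donc est \spl. Le lemme~\ref{lem-sanscarre} fournit $\gL_j[x_n]$ \eal sur $\gL_j$, le lemme~\ref{lemma1Etale}.3 le relève en \eal sur $\gK$, et le lemme~\ref{lemma1Etale}.4 recolle les composantes pour conclure que $\gB$ est \eal sur $\gK$.

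\smallskip
L'implication $(1)\Rightarrow(3)$ est le point délicat. Soit $a\in\gB$. Puisque $\gB$ est \eal, il est réduit (\ref{lemma1Etale}.5) et \zed (\ref{lemma1Etale}.1), et je compte à nouveau produire une décomposition $\gB=\gB_1\times\cdots\times\gB_s$ en \cdis \eals sur $\gK$. Dans chaque $\gB_i$, l'image $a_i$ de $a$ a un \polmin $\mu_i$ \ird (\ref{lemma1Etale}.2(d)), et $\gK[a_i]\cong\gK[X]/(\mu_i)\subseteq\gB_i$. Le lemme~\ref{lemma1Etale}.3, appliqué au \gui{sandwich} $\gK\subseteq\gK[a_i]\subseteq\gB_i$, montre que $\gK[a_i]$ est \eal sur $\gK$; le lemme~\ref{lem-sanscarre} donne alors que $\mu_i$ est \spl. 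Enfin, $m_a$ est le ppcm des $\mu_i$, donc un produit d'irréductibles \spls deux à deux premiers entre eux, et reste \spl. Le corollaire sur les extensions galoisiennes est alors immédiat: une telle extension est engendrée par les racines d'un \polu \spl, donc la condition $(2)$ est satisfaite.

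\smallskip
L'obstacle principal attendu est la justification \cov de la décomposition d'une \Klg \stfe réduite et \zede en produit de \cdis. Classiquement cela suit du lemme~\ref{lemma1Etale}.2, mais \cot cela réclame la production explicite d'un \sfio qui sépare les idéaux premiers minimaux, ce qui revient à factoriser certains \polus en parties \sples. C'est précisément ici que l'hypothèse d'étaleité (invariance du \discri) doit être mobilisée: elle permet d'effectuer ces factorisations via l'\algo du pgcd avec la dérivée, car les \pols concernés sont déjà \spls par récurrence ou par la structure locale. Le reste de l'argument n'emploie que les lemmes~\ref{lem-sanscarre} et~\ref{lemma1Etale} déjà établis.
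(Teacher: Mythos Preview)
Ton schéma $(3)\Rightarrow(2)\Rightarrow(1)\Rightarrow(3)$ est raisonnable, mais les deux implications non triviales reposent toutes deux sur une décomposition $\gB'=\gL_1\times\cdots\times\gL_s$ (resp.\ $\gB=\gB_1\times\cdots\times\gB_s$) en produit de \cdis, et c'est là que l'argument casse \cot. Une \Klg \stfe réduite et \zede, même \eal, ne se décompose pas en produit de corps sans savoir factoriser des \pols en irréductibles sur~$\gK$ --- ce qui n'est pas décidable sur un \cdi arbitraire. Le \thref{thEtalePrimitif} ne donne qu'un produit d'\algs \emph{monogènes} \eals $\gK[X]/(g_j)$, pas de corps, et le lemme~\ref{lemma1Etale}.\ref{i3fact1Etale} que tu invoques pour la transitivité exige justement que l'étage intermédiaire soit un \cdi. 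Ta tentative de remède (\gui{\algo du pgcd avec la dérivée}) ne peut pas fonctionner: pour un \pol $g$ déjà \spl on a $\pgcd(g,g')=1$, donc cet \algo ne produit aucune factorisation non triviale; il sépare les facteurs multiples, pas les facteurs irréductibles.

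La voie \cov standard évite complètement cette décomposition en corps et passe par \emph{extension des scalaires}. Pour $(2)\Rightarrow(1)$: les $m_i$ étant \spls, on construit une extension \stfe $\gL\supseteq\gK$ (par adjonctions successives de racines) dans laquelle tous les $m_i$ sont scindés; alors $\gL\te_\gK\gB$ est engendrée par des \elts dont les \polmins sont scindés à racines simples, donc est un quotient de $\gL^N$, donc isomorphe à un $\gL^m$, donc \eal; comme le \discri est invariant par \eds, $\gB$ est \eal sur~$\gK$. Pour $(1)\Rightarrow(3)$: via le \thref{thEtalePrimitif} on se ramène à $\gB=\gK[X]/(g)$ avec $g$ \spl; pour $a=p(x)$, on étend à un corps de décomposition $\gL$ de $g$ (qui existe \cot puisque $g$ est \spl), où $\gL\te_\gK\gB\simeq\gL^n$ et $a\mapsto(p(\alpha_1),\dots,p(\alpha_n))$; le \polmin $h$ de $a$ sur $\gK$ est sans facteur carré (car $\gK[a]$ est réduite), il le reste sur $\gL$ (extension \spl), et il s'y scinde puisque ses racines sont parmi les $p(\alpha_i)$: donc $h$ est \spl. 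C'est ce détour par l'\eds, et non une hypothétique scission en corps, qui rend la preuve \cov.
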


\begin{theorem}[\tho de l'\elt primitif] ~
\label{thEtalePrimitif}%
\index{primitif!theoreme@théorème de l'élément ---}
\\
Soit~$\gB$ une \Klg \eal.
\begin{enumerate}
\item  Si~$\gK$ est infini ou si~$\gB$ est un \cdi, alors~$\gB$ est une \alg monogène, \prmt de la forme~$\gK[b]\simeq\aqo{\KT}{f}$ pour un $b\in \gB$ et un $f\in\KT$ \spl. 
%
\item $\gB$ est un produit fini de \Klgs \eals monogènes.
\end{enumerate}
\end{theorem}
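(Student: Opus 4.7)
Mon plan consiste à prouver d'abord l'item 1 dans ses deux sous-cas possibles, puis d'en déduire l'item 2 par décomposition de $\gB$ en composantes connexes.

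Pour l'item 1, si $\gB$ est un \cdi et $\gK$ est fini, alors $\gB$ lui-même est fini. Son groupe multiplicatif $\gB^\times$ est cyclique et un \gtr $b$ fournit $\gB = \gK[b] \simeq \gK[T]/(f)$ où $f$ est le \polmin de $b$, \spl d'après le \thref{corlemEtaleEtage}. Si $\gK$ est infini, le \thref{corlemEtaleEtage} donne $\gB = \gK[x_1, \ldots, x_n]$ avec chaque $x_i$ \spl, et je raisonne par \recu sur $n$. Le pas d'induction crucial consiste à montrer que pour $x, y \in \gB$ \spls, il existe $t \in \gK$ tel que $\gK[x, y] = \gK[x + ty]$. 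Je considère alors la \polcar $\chi(t, T) \in \gK[t, T]$ de la multiplication par $z_t := x + ty$ sur $\gK[x, y]$ vu comme \Kmo libre, et le \pol discriminant $\Delta(t) := \Disc_T \chi(t, T) \in \gK[t]$. Pour $t_0 \in \gK$ vérifiant $\Delta(t_0) \neq 0$, la \polcar $\chi(t_0, T)$ est \spl, donc égale au \polmin de $z_{t_0}$, ce qui force $\dex{\gK[z_{t_0}] : \gK} = \dex{\gK[x, y] : \gK}$, et par conséquent $\gK[x, y] = \gK[z_{t_0}]$.

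Le principal obstacle sera d'établir constructivement que $\Delta(t)$ est non nul comme \pol en $t$. Classiquement, dans une clôture algébrique, les racines de $\chi(t, T)$ s'écrivent comme des fonctions \lins en $t$, et elles sont distinctes pour $t$ générique grâce à la \spt de $x$ et $y$. Constructivement, je compte éviter la clôture algébrique en utilisant une évaluation dynamique sur les \polmins \spls de $x$ et $y$, ou en examinant directement le coefficient dominant de $\Delta(t)$. Une fois $\Delta \not\equiv 0$ établi, l'hypothèse \gui{$\gK$ infini} fournira $t_0 \in \gK$ avec $\Delta(t_0) \neq 0$, et le \polmin $f$ de $z_{t_0}$, facteur de la \polcar \spl, est \spl : on obtient alors $\gK[x, y] = \gK[z_{t_0}] \simeq \gK[T]/(f)$.

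Pour l'item 2, je décompose $\gB$ en un produit fini $\prod_i \gB_i$ de composantes connexes. Puisque $\gB$ est \eal, elle est réduite (Lemme \ref{lemma1Etale}.\ref{i5fact1Etale}) et \zede (Lemme \ref{lemma1Etale}.\ref{i1fact1Etale}), si bien que tout \elt $a \in \gB$ produit un \idm $e \in \gK[a]$ tel que $\gen{a} = \gen{e}$. Si un tel \idm est non trivial, je scinde $\gB = \gB[1/e] \times \gB[1/(1-e)]$ en deux facteurs \eals (Lemme \ref{lemma1Etale}.\ref{i4fact1Etale}) de dimension strictement inférieure, et j'itère sur $\dex{\gB : \gK}$. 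On aboutit à des facteurs connexes, réduits et \eals, donc à des \cdis par Lemme \ref{lemma1Etale}.\ref{i2fact1Etale}. L'item 1 appliqué à chacun conclut.
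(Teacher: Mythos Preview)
Le présent article ne démontre pas ce théorème : il l'importe de \cite[Chapitre~VI]{ACMC} sans preuve, comme tous les résultats de la section~\ref{secEtaleSurCD}. Il n'y a donc pas de preuve du papier à laquelle comparer ton plan ; je l'évalue sur ses propres mérites.

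Pour l'item~1, ton approche via le polynôme caractéristique $\chi(t,T)$ de $x+ty$ et son discriminant $\Delta(t)$ est correcte et standard. L'obstacle que tu identifies ($\Delta\not\equiv 0$ constructivement) se traite bien par l'\adu : sur une extension fidèlement plate où les polynômes minimaux de $x$ et $y$ sont scindés, les couples de composantes $(x_i,y_i)$ sont deux à deux distincts (car $x,y$ engendrent $\gB$), donc les $x_i+t y_i$ sont distincts comme éléments de l'anneau des polynômes en $t$, et $\Delta(t)\neq 0$ redescend par fidèle platitude.

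Pour l'item~2, il y a une lacune constructive réelle. Tu proposes de scinder $\gB$ via les idempotents associés aux éléments jusqu'à atteindre des facteurs \emph{connexes}, puis d'appliquer l'item~1 (cas corps). Mais ton itération n'a pas de condition d'arrêt constructive : certifier qu'un facteur est connexe exige de vérifier que \emph{tous} ses idempotents sont triviaux, ce qui n'est pas décidable pour un \cdi $\gK$ quelconque. Tester les seuls générateurs ne suffit pas : dans $\gK\times\gK$ engendré par $x=(1,2)$ (avec $2\neq 0$), l'élément $x$ est inversible donc $e_x=1$, alors que l'algèbre n'est pas connexe. Il te faut soit un argument montrant que l'on peut calculer \emph{tous} les idempotents d'une algèbre \eal sur un \cdi (c'est possible, mais cela demande un travail supplémentaire via l'annulateur et la forme trace), soit réorganiser la preuve pour éviter de viser des corps : par exemple, réduire à $\gB=\gK[x,y]$ par récurrence sur le nombre de générateurs, calculer $\Delta(t)$ de degré~$D$, tester $t\in\{0,1,\ldots,D\}$ ; si l'on trouve $\Delta(t_0)\neq 0$, $\gB$ est monogène ; si $0,1,\ldots,D$ sont distincts dans $\gK$ mais annulent tous $\Delta$, contradiction avec $\Delta\not\equiv 0$ ; si $0,1,\ldots,D$ ne sont pas distincts, on est en caractéristique $p\leq D$ et il faut encore un argument séparé.
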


\section[Les \diles de Khäler]{Le module des \diles de Khäler}
\label{sec-dilesKhaler}
\Encadre{\noindent Dans la suite de l'article, les \klgs sont toujours supposées \pf.}
\vspace{-1em}

\subsection*{Dérivations}

\begin{definota}\label{defiDeriv}~\\
Soit $\gk$ un anneau commutatif, $\gA$ une \klg et $M$ un \Amo. 
\begin{enumerate}
\item On appelle \textsl{$\gk$-dérivation de $\gA$
dans $M$}, une \kli $\delta$ qui vérifie l'\egt de Leibniz%
\index{derivation@dérivation!d'une algèbre!dans un module}
\[
\delta(ab)=a\delta(b)+b\delta(a).
\]%
\item On note $\Der\gk\gA M$ le \Amo des $\gk$-\dvns de $\gA$ dans $M$.%
\index{derivation@dérivation!module des ---}%
\item Une \dvn de $\gA$  \gui{tout court} est une \dvn à valeurs dans 
$\gA$. Lorsque le contexte est clair,  $\mathrm{Der}(\gA)$ 
est une abréviation pour~$\Der{\gk}{\gA}{\gA}$.%
\index{derivation@dérivation!d'une algèbre}
\end{enumerate}
\end{definota}
\index{module!des diff@des \diles (de Kähler)}%
\index{differentielle (de@\dile (de Kähler)}%
\index{nette!algebre@algèbre ---}%

\note Si $\delta\in \Der\gk\gA M$ et $c\in\gA$, l'\kli $x\mt c\delta(x)$ est aussi une \dvn. Cela justifie le point 2 ci-dessus.\eoe

\begin{lemma} \label{lemdefiDeriv}
Soit $\delta:\gA\to M$ une \dvn, $y\in\gA,(\ux)=(\xn)\in\gA^n$, $f\in\kY$ et $g\in\kXn$. 
\begin{enumerate}
\item On a $\delta(1)=0$ car $1^{2}=1$, et donc $\delta\frt\gk=0$.
\item On a $\delta(y^k)=ky^{k-1}$.
\item On a $\delta(f(y))=f'(y)\delta(y)$.
\item On a $\delta(g(\ux))=\sum_{i=1}^n\Dpp {g}{X_i}(\ux)\delta(x_i) $.
\end{enumerate}
 
\end{lemma}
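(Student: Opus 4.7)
Mon approche consiste à déduire les quatre points directement de la \dfn, en combinant la $\gk$-linéarité de $\delta$ et la règle de Leibniz. Je traite les points dans l'ordre, chaque point servant aux suivants.

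Pour le point~1, j'applique Leibniz à $1=1\cdot 1$, ce qui donne $\delta(1)=\delta(1)+\delta(1)$, d'où $\delta(1)=0$. Comme $\delta$ est $\gk$-linéaire, pour tout $c\in\gk$, d'image $c\cdot 1_{\gA}$ dans $\gA$, on obtient $\delta(c\cdot 1_{\gA})=c\,\delta(1_{\gA})=0$. Donc $\delta$ s'annule sur l'image de $\gk$ dans $\gA$.

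Pour le point~2, je procède par \recu sur $k\geq 1$. Le cas $k=1$ est tautologique. Pour le passage de $k$ à $k+1$, Leibniz donne
\[
\delta(y^{k+1})=\delta(y\cdot y^k)=y\,\delta(y^k)+y^k\,\delta(y),
\]
et l'\hdr $\delta(y^k)=k\,y^{k-1}\delta(y)$ permet de conclure immédiatement. Pour le point~3, on écrit $f(Y)=\sum_{i=0}^{d}a_i Y^i$ avec $a_i\in\gk$. La $\gk$-linéarité combinée aux points~1 et~2 fournit
\[
\delta(f(y))=\sum_{i=1}^{d}a_i\,i\,y^{i-1}\delta(y)=f'(y)\,\delta(y).
\]

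Pour le point~4, on étend le raisonnement au cas multivarié. On écrit $g=\sum_\alpha a_\alpha \uX^{\alpha}$ où $\alpha$ parcourt un ensemble fini de multi-indices et $a_\alpha\in\gk$. Une application répétée de Leibniz à chaque monôme $\ux^{\alpha}=x_1^{\alpha_1}\cdots x_n^{\alpha_n}$ donne
\[
\delta(\ux^{\alpha})=\sum_{i=1}^{n}\alpha_i\,x_1^{\alpha_1}\cdots x_i^{\alpha_i-1}\cdots x_n^{\alpha_n}\,\delta(x_i).
\]
En reconnaissant le coefficient de $\delta(x_i)$ comme $\Dpp{\uX^{\alpha}}{X_i}(\ux)$, puis en sommant sur $\alpha$ avec la $\gk$-linéarité de $\delta$, on obtient la formule annoncée.

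Il n'y a pas d'obstacle réel: tout découle mécaniquement de la \dfn. Le seul point demandant un peu de soin est l'itération de Leibniz pour les monômes multivariés au point~4, qui se traite par une petite \recu sur le nombre de facteurs (ou, de manière équivalente, sur le degré total du monôme).
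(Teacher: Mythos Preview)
Your proof is correct and follows the standard, natural approach. The paper does not actually supply a proof for this lemma: the justification for point~1 is embedded in the statement itself (\gui{car $1^2=1$}), and points~2 to~4 are left without proof as routine consequences of the Leibniz rule and $\gk$-linearity. Your argument fills in exactly these details in the expected way. Note also that you have silently corrected the typo in point~2 of the statement (the paper writes $\delta(y^k)=ky^{k-1}$, omitting the factor $\delta(y)$), which is the right thing to do.
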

%
%
%
\subsection*{Le module des différentielles}

On considère maintenant le cas d'une \apf 
\[\gA=\aqo{\kXn}{\lfs}=\kxn=\gk[\ux].\]

On rappelle que la matrice jacobienne du \syp est définie comme\index{jacobien!matrice ---ne}
\[ \JJ_{\uX}(\uf)\; =\;
\bordercmatrix [\lbrack\rbrack]{
    & X_1                     & X_2                     &\cdots  & X_n \cr
f_1 & \Dpp {f_1}{X_1} &\Dpp {f_1}{X_2}  &\cdots  &\Dpp {f_1}{X_n} \cr
f_2 & \Dpp {f_2}{X_1} &\Dpp {f_2}{X_2}  &\cdots  &\Dpp {f_2}{X_n} \cr
f_i & \vdots                  &                         &        & \vdots              \cr
 & \vdots                  &                         &        & \vdots              \cr
f_s & \Dpp {f_s}{X_1} &\Dpp {f_s}{X_2}  &\cdots  &\Dpp {f_s}{X_n} \cr
}
.
\]
Dans la suite, lorsque le contexte est clair, on notera $\rja$ l'\ali définie par la matrice transposée
\[
\rja:=\tra{\JJ_{\uX}(\uf)}:\Ae s\to \Ae n,
\]
et  $(e_1,\dots,e_n)$ la base canonique de~$\Ae n$.
On note $\ov{e_i}$ l'\elt $e_i$ vu dans le quotient $\Coker(\rja)$ de $\Ae n$. On définit 
\[ 
\begin{array}{rcl} 
\rd:\gA\to\Coker(\rja)  &:   & g(\ux)\mapsto   \som_{i=1}^{n} \Dpp {g}{X_i}(\ux)\,\ov{e_i},
 \end{array}
\]

\begin{theorem}[\textsl{dérivation \uvle via la jacobienne}]\label{thDerivUnivPF} 
On utilise les notations précédentes avec la \klg
$
\gA=\aqo{\kXn}{\lfs}=\gk[\ux]. 
$
\begin{enumerate}%
\item L'application $\rd$ est une $\gk$-\dvn avec $\rd(x_i)=\ov{e_i}$.
\item L'application $\rd$ est  une \emph{\dvn \uvle} 
au sens suivant. %
\\
Pour tout \Amo $M$ et
toute $\gk$-\dvn $\delta:\gA\to M$, il existe une unique \Ali $\theta:\Coker(\rja)\to M$
telle que $\theta\circ \rd=\delta$.
\Pnv{\gA}{\rd}{\delta}{\Coker(\rja)}{\theta}{M}{}{$\gk$-\dvns}{\Alis.}
On note \fbox{$\Om{\gk}{\gA}=\Coker(\rja)$}. On l'appelle \emph{le module des \diles de Khäler}.
\end{enumerate}
\end{theorem}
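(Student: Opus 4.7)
Le plan se décompose naturellement suivant les deux points de l'énoncé, en s'appuyant de manière essentielle sur le lemme~\ref{lemdefiDeriv}. On notera $(\varepsilon_1,\ldots,\varepsilon_s)$ la base canonique de $\Ae s$ pour la distinguer de la base $(e_1,\ldots,e_n)$ de $\Ae n$.

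Pour le point~1, il faut d'abord vérifier que l'application $\rd$, définie au départ par un représentant dans $\kXn$, passe au quotient modulo $\gen{\lfs}$ pour donner une application bien définie sur $\gA$. Supposons que $g = h + \sum_{j=1}^s a_j f_j$ dans $\kXn$. La règle de Leibniz pour les polynômes donne
\[
\Dpp{g}{X_i} \;=\; \Dpp{h}{X_i} + \sum_j \Big(\Dpp{a_j}{X_i}\, f_j + a_j\, \Dpp{f_j}{X_i}\Big).
\]
En évaluant en $\ux$, le terme $\Dpp{a_j}{X_i}(\ux)\, f_j(\ux)$ disparaît puisque $f_j(\ux) = 0$ dans~$\gA$, d'où
\[
\Dpp{g}{X_i}(\ux) - \Dpp{h}{X_i}(\ux) \;=\; \sum_j a_j(\ux)\, \Dpp{f_j}{X_i}(\ux).
\]
Or, par définition de $\rja$, le vecteur $\big(\Dpp{f_j}{X_1}(\ux), \ldots, \Dpp{f_j}{X_n}(\ux)\big)$ n'est autre que $\rja(\varepsilon_j)$, donc nul dans $\Coker(\rja)$. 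On en déduit $\rd(g) = \rd(h)$. La $\gk$-linéarité de $\rd$ et la règle de Leibniz pour $\rd$ se déduisent alors directement des propriétés correspondantes des dérivées partielles. L'égalité $\rd(x_i) = \ov{e_i}$ résulte de la définition appliquée à $g = X_i$.

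Pour le point~2, on construit $\theta$ par factorisation d'une \Ali auxiliaire. On pose $\wi\theta : \Ae n \to M$ par $\wi\theta(e_i) = \delta(x_i)$, prolongée par $\gA$-linéarité. Il faut vérifier que $\wi\theta \circ \rja = 0$. Pour chaque $j$,
\[
\wi\theta\big(\rja(\varepsilon_j)\big) \;=\; \sum_i \Dpp{f_j}{X_i}(\ux)\, \delta(x_i) \;=\; \delta(f_j(\ux)) \;=\; \delta(0) \;=\; 0,
\]
en utilisant le point~4 du lemme~\ref{lemdefiDeriv}. Ainsi $\wi\theta$ induit une \Ali $\theta : \Coker(\rja) \to M$. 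Le même point~4 donne $\theta(\rd(g(\ux))) = \sum_i \Dpp{g}{X_i}(\ux)\, \delta(x_i) = \delta(g(\ux))$ pour tout $g \in \kXn$, d'où $\theta \circ \rd = \delta$. L'unicité de $\theta$ provient de ce que $\Coker(\rja)$ est engendré comme \Amo par les $\ov{e_i} = \rd(x_i)$, donc les valeurs $\theta(\ov{e_i}) = \delta(x_i)$ sont imposées par la relation $\theta \circ \rd = \delta$.

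Le point le plus délicat me semble résider dans la vérification du caractère bien défini de $\rd$ au point~1: c'est précisément là qu'intervient à la fois la règle de Leibniz pour les polynômes et la raison pour laquelle on transpose la matrice jacobienne. Quotienter par $\Im(\rja)$ revient exactement à imposer que l'image de chacune des \gui{relations} $f_j$, vue par l'intermédiaire de son vecteur gradient $(\Dpp{f_j}{X_1},\ldots,\Dpp{f_j}{X_n})$, s'annule. Tout le reste de la démonstration découle mécaniquement du lemme~\ref{lemdefiDeriv}, et en particulier de son point~4, qui est lui-même une conséquence itérée de la règle de Leibniz.
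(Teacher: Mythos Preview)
Your proof is correct. The paper does not actually give its own proof of this theorem: it is part of the background material imported from \cite[Chapitre~VI]{ACMC}, and the argument you wrote is the standard one found there, with the well-definedness of $\rd$ and the factorisation of $\wi\theta$ through $\Coker(\rja)$ both handled exactly as expected via the Leibniz rule and point~4 of Lemma~\ref{lemdefiDeriv}.
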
%
%

\begin{lemma}[extension des scalaires et \dvn \uvle] \label{lem-Om-eds}~

\noindent  Soit $\gA$ une \klg et un morphisme $\alpha:\gk\to\gk'$ vu comme une extension des scalaires. Notons $\gA'=\alpha\ist(\gA)\simeq \gk'\te_\gk\gA$.\\
Alors on a un \iso naturel\footnote{Plus \prmt, l'application $\gA'$-linéaire $\gA'\to \gk'\te_\gk\Om{\gk}{\gA}$ est une \dvn \uvle pour la $\gk'$-\alg $\gA'$.}  $\Om{\gk'}{\gA'}\simeq\alpha\ist(\Om{\gk}{\gA})\simeq\gk'\te_\gk\Om{\gk}{\gA}$.
\end{lemma}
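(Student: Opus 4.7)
Le plan est d'exploiter la \carn de $\Om{\gk}{\gA}$ comme conoyau de la transposée jacobienne donnée par le \tho \ref{thDerivUnivPF}, combinée au fait que l'\eds est exacte à droite et commute donc aux conoyaux. L'argument est alors essentiellement formel.

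D'abord, je pars d'une \pn finie $\gA=\aqo{\kXn}{\lfs}$. Par \eds on a automatiquement $\gA'\simeq\aqo{\gk'[\Xn]}{f'_1,\ldots,f'_s}$, où $f'_i$ désigne l'image de $f_i$ par l'extension polynomiale de $\alpha\colon\gk\to\gk'$. Le \tho \ref{thDerivUnivPF} appliqué à ces deux \pns donne $\Om{\gk}{\gA}=\Coker(\rja)$ avec $\rja\colon\gA^s\to\gA^n$, et de même $\Om{\gk'}{\gA'}=\Coker(\rja')$ avec $\rja'\colon(\gA')^s\to(\gA')^n$.

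Le point clé consiste à vérifier que les dérivées partielles formelles commutent avec $\alpha$, \cad $\Dpp{f'_i}{X_j}=\alpha\bigl(\Dpp{f_i}{X_j}\bigr)$ dans $\gk'[\Xn]$. Cette compatibilité est \imde par \lin puisque la dérivée formelle d'un monôme est un monôme à \coes entiers. Par conséquent, en utilisant les identifications canoniques $\gA'\te_\gA\Ae n\simeq(\gA')^n$ et $\gA'\te_\gA\Ae s\simeq(\gA')^s$, l'\ali $\rja'$ coïncide avec $\alpha\ist(\rja)=\gA'\te_\gA\rja$.

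Il suffit alors d'invoquer l'exactitude à droite du foncteur $\gk'\te_\gk-$ (ou de manière équivalente $\gA'\te_\gA-$), qui commute aux conoyaux, pour obtenir
\[
\Om{\gk'}{\gA'}=\Coker(\rja')\simeq\gA'\te_\gA\Coker(\rja)\simeq\gk'\te_\gk\Om{\gk}{\gA}.
\]
Pour la remarque en bas de page, il suffira de suivre les générateurs sous cet \iso : $\rd_{\gA'}(x_i)=\ov{e_i}\in\Om{\gk'}{\gA'}$ correspond à $1\te\rd_\gA(x_i)\in\gk'\te_\gk\Om{\gk}{\gA}$, ce qui identifie l'application composée $\gA'\to\gk'\te_\gk\Om{\gk}{\gA}$ à la \dvn \uvle de $\gA'$ telle que décrite par le \tho \ref{thDerivUnivPF}. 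La seule \gui{difficulté}, purement notationnelle, sera de manier avec soin les identifications canoniques des modules libres après \eds et de traduire la naturalité en termes des générateurs $\ov{e_i}$.
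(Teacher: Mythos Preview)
Your argument is correct and is precisely the natural one in the context of this paper: since $\Om{\gk}{\gA}$ is \emph{defined} here as $\Coker(\rja)$ (théorème~\ref{thDerivUnivPF}) and the same presentation of $\gA$ base-changed to $\gk'$ produces the Jacobian $\alpha\ist(\rja)$, right exactness of $\gk'\te_\gk-$ gives the isomorphism immediately. The paper itself does not give a proof of this lemma (it is imported from \cite[Chapitre VI]{ACMC}), so there is nothing to compare against beyond noting that your approach is exactly the one the paper's presentation invites.
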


\begin{lemma}[localisation en haut] \label{propOmkBS} ~

\noindent
 Soient $\gA$ une \klg,  $S$ un \mo de $\gA$ et $M$ un \Amo.
\begin{enumerate}
\item \label{iDerpropOmkBS}%
\begin{enumerate}
\item Une $\gk$-\dvn de $\gA$ dans $M$ induit une unique \dvn de
$\gA_S$ dans $M_S$. Ceci permet de définir un \homo canonique
de $\gA_S$-modules:
$$ 
\varphi:\left(\Der{\gk}{\gA}{M}\right)_S\longrightarrow \Der{\gk}{\gA_S}{M_S}.  
$$
\item L'homomorphisme $\varphi$ est un \iso si $M$ ou $\Om\gk\gA$ est un \Amo \ptf.

\item Si $\gA$ est une une \klg \tf, alors  $\varphi$ est injective. 

\item Si $\gA$ est une \klg \pf, ou encore une \klg \tf et  intègre, alors~$\varphi$ est un \iso.
\end{enumerate}
\item \label{iOmpropOmkBS} On a un \iso naturel de $\gA_S$-modules $(\Om\gk\gA)_S\to\Om\gk{\gA_S}$.
\end{enumerate}

\end{lemma}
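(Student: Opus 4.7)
My plan is to handle \ref{iDerpropOmkBS}(a) and \ref{iOmpropOmkBS} first and to deduce \ref{iDerpropOmkBS}(b), (c), (d) from them through the universal property. For \ref{iDerpropOmkBS}(a), I would extend a given $\gk$-derivation $\delta:\gA\to M$ to $\tilde\delta:\gA_S\to M_S$ by the forced quotient rule
\[
\tilde\delta(a/s)=\frac{s\,\delta(a)-a\,\delta(s)}{s^{2}},
\]
then check that this is well-defined modulo the equivalence relation of $\gA_S$, is $\gk$-linear, and satisfies the Leibniz identity on $\gA_S$. Uniqueness is automatic because any extension must satisfy $\tilde\delta(1/s)=-\delta(s)/s^2$ via $0=\tilde\delta(1)=\tilde\delta(s\cdot 1/s)$. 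The assignment $\delta\mapsto\tilde\delta$ is $\gA$-linear with values in an $\gA_S$-module, so it factors through $(\Der\gk\gA M)_S$ to yield the canonical $\gA_S$-linear $\varphi$.

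For \ref{iOmpropOmkBS}, I would invoke Yoneda. Part \ref{iDerpropOmkBS}(a), applied with $M=N$ an $\gA_S$-module, shows that restriction along $\gA\to\gA_S$ induces a bijection $\Der\gk{\gA_S}N\simeq\Der\gk\gA N$. The universal property of Kähler differentials rewrites this as $\Hom_{\gA_S}(\Om\gk{\gA_S},N)\simeq\Hom_\gA(\Om\gk\gA,N)$, and the extension/restriction adjunction along $\gA\to\gA_S$ identifies the right-hand side with $\Hom_{\gA_S}\bigl((\Om\gk\gA)_S,N\bigr)$. Both $(\Om\gk\gA)_S$ and $\Om\gk{\gA_S}$ therefore represent the same functor on $\gA_S$-modules, and the canonical derivation on either side provides the natural isomorphism.

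Now \ref{iDerpropOmkBS}(b), (c), (d) all reduce, via the universal property and \ref{iOmpropOmkBS}, to the localization-of-$\Hom$ map
\[
\Hom_\gA(\Om\gk\gA,M)_S\ \longrightarrow\ \Hom_{\gA_S}\bigl((\Om\gk\gA)_S,M_S\bigr).
\]
When $\Om\gk\gA$ or $M$ is finitely generated projective, this map is an isomorphism by the standard behaviour of $\Hom$ under localization, giving (b). For (c), the finite generation of $\gA=\gk[\an]$ reduces to a vanishing check: if $\delta/s$ lies in $\Ker\varphi$, then each $\delta(a_i)/s=0$ in $M_S$, so there are $u_i\in S$ with $u_i\delta(a_i)=0$; taking $u=\prod u_i$ and using Leibniz together with $\gk$-linearity forces $u\delta=0$ on all of $\gA$, whence $\delta/s=0$. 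For (d), if $\gA$ is \pf, then by Theorem~\ref{thDerivUnivPF} the module $\Om\gk\gA$ is itself \pf as an $\gA$-module, and the Hom-localization formula applies without any hypothesis on $M$, giving the isomorphism. The subcase \emph{finitely generated integral} will be the main obstacle, since $\Om\gk\gA$ is only known to be finitely generated; I would treat it separately by writing $\gA$ as a quotient of a polynomial ring, lifting a derivation on $\gA_S$ to a derivation on a localization of the polynomial ring, and using the torsion-freeness afforded by integrality to clear denominators and produce the required preimage in $\Der\gk\gA M$.
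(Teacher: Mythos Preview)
The paper does not actually supply a proof of this lemma: it is quoted from \cite[Chapitre~VI]{ACMC}, and immediately before it the paper imposes the blanket convention that all $\gk$-algebras are \pf. So there is nothing to compare against, and in the paper's internal logic only the \pf case of~(d) is ever invoked (in the proof of Theorem~\ref{thAlgNetteSruCdi}).

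Your treatment of \ref{iDerpropOmkBS}(a), of \ref{iOmpropOmkBS} via Yoneda, and of \ref{iDerpropOmkBS}(b), (c), and the \pf half of (d) via the $\Hom$-localization map for the finitely presented $\gA$-module $\Om\gk\gA$ is the standard argument and is correct. In particular your injectivity argument for (c) is clean: a derivation killed by some $u\in S$ on each generator is killed by $u$ everywhere, by Leibniz.

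The one genuine gap is the \emph{integral} subcase of (d). Your sketch appeals to ``the torsion-freeness afforded by integrality'', but integrality of $\gA$ gives you nothing about the arbitrary $\gA$-module $M$. Concretely: lifting $D\in\Der\gk{\gA_S}{M_S}$ to the polynomial ring $R=\gk[X_1,\dots,X_n]$ and using the \pf case there produces a derivation $\delta':R\to M$ with $\delta'(f)$ lying in the $S$-torsion of $M$ for every $f$ in the (possibly non finitely generated) prime $\fp=\Ker(R\to\gA)$. To descend $\delta'$ to $\gA$ you need these torsion elements to vanish, or at least to be killed by a \emph{single} $s\in S$; neither follows from integrality of $\gA$ when $M$ is arbitrary. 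You should either supply a different mechanism here or note the restriction. Since the paper only uses the \pf case, this gap does not affect anything downstream.
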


\begin{lemma}[extension d'une \alg et \dvns]
\label{propOmABC} ~\\
Soient $\gA$ une \klg, $\gA\vers{\rho}\gB$ une \alg et $M$ un \Bmo\footnote{Malgré le titre du lemme $\rho$ n'est pas supposé injectif.}.
\begin{enumerate}
\item %
\begin{enumerate}
\item Puisque toute $\gA$-\dvn de $\gB$ dans $M$ est aussi
une $\gk$-\dvn on~a une injection $\gB$-\lin canonique
$j:\Der{\gA}{\gB}{M}\to \Der{\gk}{\gB}{M}$.
\item Puisque $M$ est naturellement
un \Amo, on~a une \Bli canonique
\[ 
  \wi \rho:\Der{\gk}{\gB}{M}\lora \Der{\gk}{\gA}{M} \;:\;  \delta\mapsto\delta\circ\rho.
\] 
On peut la noter $\Der\gk\rho M$.
\item La suite d'\Blis ci-dessous est exacte
\[ 
0\;\longrightarrow\;\Der{\gA}{\gB}{M} \;\vvers{j}\;\Der{\gk}{\gB}{M}
\;\vvers{{\wi \rho}}\;\Der{\gk}{\gA}{M}  
\]
En particulier si $\Der{\gk}{\gA}{M}=0$ alors
$\Der{\gA}{\gB}{M}=\Der{\gk}{\gB}{M}$.
\end{enumerate}

\item La suite d'\Blis ci-dessous est exacte
\[
\gB\otimes_\gk \Om{\gk}{\gA}\;\longrightarrow\;\Om{\gk}{\gB}
\;\longrightarrow\;\Om{\gA}{\gB}\;\longrightarrow\;0  
\]
En particulier si $\Om{\gk}{\gA}=0$ alors
$\Om{\gA}{\gB}=\Om{\gk}{\gB}$.
\end{enumerate}
\end{lemma}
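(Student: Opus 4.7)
La stratégie est de vérifier la partie~1 directement à partir des définitions et de la règle de Leibniz, puis de déduire la partie~2 par un argument fonctoriel s'appuyant sur la propriété universelle du module des différentielles établie au théorème~\ref{thDerivUnivPF}.

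Pour 1(a), l'injection $j$ est tautologique car toute $\gA$-\dvn est en particulier une $\gk$-\dvn via la structure de $\gk$-algèbre de~$\gA$; la $\gB$-linéarité est claire. Pour 1(b), le fait que $\delta\circ\rho$ soit une $\gk$-\dvn résulte immédiatement de ce que $\rho$ est un morphisme de $\gk$-algèbres et que $\delta$ vérifie Leibniz. Le point crucial est l'exactitude au milieu de 1(c): si $\delta\in\Der\gk\gB M$ satisfait $\delta\circ\rho=0$, alors pour tous $a\in\gA$ et $b\in\gB$, Leibniz donne
\[
\delta(\rho(a)\,b)=\rho(a)\,\delta(b)+b\,\delta(\rho(a))=\rho(a)\,\delta(b),
\]
ce qui montre que $\delta$ est $\gA$-linéaire, donc appartient à l'image de~$j$. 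La réciproque, à savoir $\wi\rho(j(\delta))=0$ pour toute $\gA$-\dvn $\delta$, est immédiate puisque $\delta\circ\rho$ annule déjà l'image de~$\rho$ lorsque $\delta$ est $\gA$-linéaire et $\delta(1_\gB)=0$.

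Pour la partie~2, les deux flèches se construisent via les propriétés universelles. La $\gA$-\dvn universelle $\rd_{\gA/\gB}:\gB\to\Om\gA\gB$ est en particulier une $\gk$-\dvn par~1(a), donc se factorise en une \Ali $\Om\gk\gB\to\Om\gA\gB$. La composée $\rd_{\gk/\gB}\circ\rho:\gA\to\Om\gk\gB$ est une $\gk$-\dvn à valeurs dans $\Om\gk\gB$ vu comme $\gA$-module via~$\rho$; la propriété universelle de $\Om\gk\gA$ donne une \Ali $\Om\gk\gA\to\Om\gk\gB$, puis la tensorisation par~$\gB$ fournit la flèche $\gB\otimes_\gk\Om\gk\gA\to\Om\gk\gB$. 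L'exactitude s'obtient alors par dualité: pour tout $\gB$-module~$M$, l'application de $\Hom_\gB(-,M)$ à cette suite, combinée avec l'adjonction tensor-hom et les propriétés universelles, redonne exactement la suite exacte de~1(c). Constructivement, on spécialise $M=\Om\gA\gB$ pour obtenir la surjectivité de $\Om\gk\gB\to\Om\gA\gB$, puis $M=\Coker(\gB\otimes_\gk\Om\gk\gA\to\Om\gk\gB)$ pour l'exactitude au milieu.

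Le principal point technique sera de vérifier que le morphisme $\gB\otimes_\gk\Om\gk\gA\to\Om\gk\gB$ se factorise canoniquement par $\gB\otimes_\gA\Om\gk\gA$ sans modifier son image, ce qui permet d'appliquer proprement l'adjonction sous la forme $\Hom_\gB(\gB\otimes_\gA\Om\gk\gA,M)\simeq\Hom_\gA(\Om\gk\gA,M)\simeq\Der\gk\gA M$. Les corollaires annoncés ($\Der\gA\gB M=\Der\gk\gB M$ si $\Der\gk\gA M=0$, et $\Om\gA\gB=\Om\gk\gB$ si $\Om\gk\gA=0$) en découlent alors immédiatement par passage au zéro dans les suites exactes respectives.
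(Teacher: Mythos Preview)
The paper does not include a proof of this lemma; as announced in the introduction, the results of section~\ref{sec-dilesKhaler} are stated without proof and drawn from \cite[Chapitre~VI]{ACMC}. Your argument is correct and follows the standard route: part~1 is verified directly from the Leibniz rule, and part~2 is deduced from the universal property of $\Om\gk{-}$ by observing that $\Hom_\gB(-,M)$ applied to the claimed sequence reproduces the exact sequence of~1(c). Your remark that the map from $\gB\otimes_\gk\Om\gk\gA$ factors through $\gB\otimes_\gA\Om\gk\gA$ with the same image is exactly the right technical point, and the specialisations $M=\Om\gA\gB$ and $M=\Coker(\gB\otimes_\gk\Om\gk\gA\to\Om\gk\gB)$ are the usual way to extract surjectivity and middle-exactness constructively.
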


\section{Algèbres lisses, nettes, étales} \label{subsecLNE}\label{secLNE}

\begin{definition} \label{defi-LNE} 
On considère une \klg \pf 
\[ 
\gA=\aqo\kXn\lfs
\] et l'on note $\rja$ la transposée de la matrice jacobienne.
\begin{enumerate}
\item La \klg $\gA$ est dite \textsl{nette} ou \textsl{non ramifiée}  lorsque \hbox{$\Om{\gk}{\gA}=0$}, i.e. $\rja$ est surjective, 
ou encore $\cD_{\gA,n}(\rja)=\gen{1}$.%
\item 
\begin{enumerate}
\item La \klg  $\gA$  est dite \textsl{présentée comme lisse de base} (\textsl{standard smooth} chez Stacks) si $s\leq n$ et le premier mineur $s\times s$ de $\rja$ est \iv dans $\gA$.
\item La \klg $\gA$  est dite \textsl{lisse} s'il existe un \sys $\vr$ d'\eco de $\gk$ telle que chacune des \algs localisées en $v_j$\footnote{Il s'agit de la $\gk[1/v_j]$-\alg $\gA[1/\rho(v_j)]$.} admet une \pn lisse de base.
\item La \klg $\gA$  est dite \textsl{élémentairement lisse} si $\rja$ est \iv à gauche, \cad encore si $\cD_{\gA,s}(\rja)=\gen{1}$. 
\end{enumerate}
\item 
\begin{enumerate}
\item La \klg $\gA$  est \textsl{présentée comme étale de base} si $s=n$ et $\det(\rja)\in\Ati$, i.e. $\cD_{\gA,n}(\rja)=\gen{1}$. On dit aussi dans ce cas que $\gA$ est une \textsl{\agN}. 
\item La \klg~$\gA$  est dite \textsl{étale} s'il existe un \sys $\vr$ d'\eco de $\gk$ telle que chacune des \algs localisées en $v_j$ admet une \pn étale de base.
%
\end{enumerate}
\end{enumerate}
\end{definition}

\begin{lemma}[stabilité par produit fini]\label{lem-Om-prodfin}
Soient $\gA_1$, \dots, $\gA_r$ des \klgs \pf et $\gA=\prod_{i=1}^r\gA_i$.
L'\alg $\gA$ est nette (resp. lisse, étale) \ssi chacune des $\gA_i$ est nette (resp. lisse, étale).
\end{lemma}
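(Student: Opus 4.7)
Une \recu immédiate sur $r$ ramène au cas $r=2$; posons $\gA=\gA_1\times\gA_2$ avec \sfio $(e_1,e_2)=((1,0),(0,1))$, de sorte que $\gA[1/e_i]\simeq\gA_i$. Pour la netteté, $\gA$ étant \pf, le lemme~\ref{propOmkBS} fournit $(\Om\gk\gA)[1/e_i]\simeq\Om\gk{\gA_i}$. Le $\gA$-module $\Om\gk\gA$ se scinde alors en produit via ce \sfio: $\Om\gk\gA\simeq\Om\gk{\gA_1}\times\Om\gk{\gA_2}$, qui s'annule \ssi chaque facteur s'annule.

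Pour la lissité et l'étalité, je commencerais par le lemme auxiliaire suivant: si $\gA=\gk[\uX]/(\uf)$ admet une \pn lisse (resp. étale) de base et si $h\in\gA$, alors $\gA[1/h]\simeq\gk[\uX,T]/(\uf,Th-1)$ en admet aussi une. En effet, le mineur $(s+1)\times(s+1)$ obtenu en prenant les $s$ colonnes pivots de la jacobienne augmentées de la colonne en $T$ a pour \deter $\pm h\cdot\det(J_f^{s\times s})$, \iv dans $\gA[1/h]$ (et l'on reste dans le cas $s+1=n+1$ pour l'étale). Pour le sens \emph{direct}, sur un \rcv $(w_j)$ de $\gk$ rendant $\gA[1/w_j]$ présentée comme lisse (resp. étale) de base, ce lemme auxiliaire appliqué à l'image de $e_i$ montre que $\gA_i[1/w_j]=\gA[1/w_j][1/e_i]$ est présentée comme lisse (resp. étale) sur $\gk[1/w_j]$; le même \rcv $(w_j)$ témoigne alors de la lissité (resp. étalité) de $\gA_i$.

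Pour la \emph{réciproque}, je raffinerais les \rcvs individuels en un \rcv commun $(w_j)$ de $\gk$ rendant les deux $\gA_i[1/w_j]$ présentées comme lisses (resp. étales) de base. Moyennant un capitonnage (ajout à chaque présentation de variables $Y$ accompagnées de relations $Y-h(\uX)=0$, qui ne modifient ni l'algèbre ni l'inversibilité du mineur jacobien initial), on peut supposer $\gA_i[1/w_j]=\gk[1/w_j][Z_1,\ldots,Z_n]/(f_1^{(i)},\ldots,f_s^{(i)})$ avec la même forme $(n,s)$. Je propose alors la \pn
\[
\gk[1/w_j][U,Z_1,\ldots,Z_n]\big/\big(U^2-U,\ Uf_k^{(1)}+(1-U)f_k^{(2)},\ k=1,\ldots,s\big),
\]
isomorphe à $\gA[1/w_j]$ via $U\mapsto(1,0)$ et $Z_i\mapsto(X_i^{(1)},X_i^{(2)})$ (les localisations en $U$ et en $1-U$ redonnent respectivement $\gA_1[1/w_j]$ et $\gA_2[1/w_j]$). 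Le mineur $(s+1)\times(s+1)$ initial de sa jacobienne vaut $(2U-1)\cdot\det\!\big(UJ^{(1)}+(1-U)J^{(2)}\big)$; le premier facteur est de carré $1$ dans le quotient donc \iv, et le second s'évalue en $\det J^{(i)}$ sur chaque composante, \iv par hypothèse. Le point délicat sera ce capitonnage préalable: il faudra vérifier qu'on peut toujours homogénéiser les formes $(n_1,s_1)$ et $(n_2,s_2)$ en une forme commune $(n,s)$ avec $s\le n$ (et $s=n$ dans le cas étale), sans perdre l'inversibilité d'un mineur $s\times s$ de la jacobienne.
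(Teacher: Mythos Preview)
Le papier énonce ce lemme sans démonstration. Votre plan est correct pour la netteté, pour le sens direct lisse/étale (votre lemme auxiliaire sur $\gA[1/h]$ est juste), et pour la réciproque dans le cas étale: là le capitonnage fonctionne puisque $s_i=n_i$ des deux côtés, et l'ajout de couples (variable $Y$, relation $Y$) ramène les deux présentations à une même forme $(n,n)$; votre présentation en $U$ est alors bien étale de base, avec le calcul de déterminant que vous indiquez.

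En revanche, le \gui{point délicat} que vous signalez pour la réciproque lisse est une véritable obstruction et non un détail technique. Le capitonnage par ajout de $(Y,\,Y-h(\uX))$ transforme $(n,s)$ en $(n{+}1,s{+}1)$, donc préserve $n-s$; on ne peut homogénéiser que si $n_1-s_1=n_2-s_2$. Ce n'est pas contournable: dans toute présentation lisse de base, $\rja$ est scindée injective (son bloc $s\times s$ supérieur est inversible), donc $\Om\gk\gA=\Coker(\rja)$ est \emph{libre de rang $n-s$}. Avec la définition~\ref{defi-LNE} telle qu'écrite (les $v_j$ \com sont pris dans $\gk$, pas dans $\gA$), la réciproque est même en défaut: $\gA_1=\gk$ et $\gA_2=\gk[Y]$ sont lisses, mais $\gA=\gk\times\gk[Y]$ a $\Om\gk\gA\simeq 0\times\gk[Y]$, projectif de rang non constant; aucune localisation $\gA[1/v]$ avec $v\in\gk$ et $\gk[1/v]\neq 0$ ne peut donc admettre de présentation lisse de base. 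Il faudrait localiser dans $\gA$ (comme le fait Stacks) pour que l'énoncé tienne dans le cas lisse. Le papier n'utilise ce lemme que pour le cas étale (preuve du lemme~\ref{lemAGN0}), où votre argument aboutit.
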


\begin{lemma} \label{lemAGN0}
Si $\gk$ est un \cdi, toute \alg \eal est une \alg étale.
\end{lemma}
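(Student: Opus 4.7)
\medskip\noindent\textbf{Plan.}
L'idée est de ramener la question au cas monogène grâce au \tho de l'\elt primitif, puis de reconnaître qu'une \pn monogène \spl est déjà une \pn étale de base.

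J'appliquerai d'abord le \thref{thEtalePrimitif} (point~2) pour décomposer l'\alg \eal $\gA$ en un produit fini
\[
\gA\;\simeq\;\aqo{\KT}{f_1}\times\cdots\times\aqo{\KT}{f_r}
\]
de \Klgs \eals monogènes, chaque $f_i\in\KT$ étant un \polu \spl (en vertu du lemme~\ref{lem-sanscarre}, une \alg monogène $\aqo{\KT}{f_i}$ est \eal \ssi $f_i$ est \spl). Par le lemme~\ref{lem-Om-prodfin} (stabilité par produit fini), il suffira alors de montrer que chaque facteur $\gB_i=\aqo{\KT}{f_i}$ est une \alg étale.

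Pour un tel facteur $\gB=\aqo{\KT}{f}$ avec $f$ \spl, j'observerai qu'il s'agit déjà d'une \pn avec $n=s=1$, dont la transposée de la jacobienne se réduit à l'unique coefficient $f'(x)$, où $x$ désigne la classe de $T$ dans $\gB$. La séparabilité de $f$ équivaut à $\gcd(f,f')=1$, et une identité de Bézout $uf+vf'=1$ fournira $v(x)\,f'(x)=1$ dans $\gB$, donc $\det(\rja)=f'(x)\in\gB^\times$. La \pn $\gB=\aqo{\KT}{f}$ sera ainsi \gui{présentée comme étale de base} au sens de la \dfn~\ref{defi-LNE}; le \sys d'\eco trivial $(1)$ convenant toujours au-dessus du \cdi $\gK$, l'\alg $\gB$ est alors étale.

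Le seul pas non \elr de l'argument est l'invocation du \tho de l'\elt primitif; la vérification qu'une \pn \spl monogène est \imdt une \pn étale de base relève d'un calcul direct utilisant uniquement l'identité de Bézout. Je n'anticipe donc pas d'obstacle sérieux, et le résultat apparaîtra comme un corolaire \imd des lemmes rappelés aux sections précédentes.
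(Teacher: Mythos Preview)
Your proof is correct and follows essentially the same route as the paper's: both invoke the \thref{thEtalePrimitif} (point~2) to reduce to monogenic factors, use lemma~\ref{lem-sanscarre} to see each factor is $\aqo{\KT}{f}$ with $f$ \spl, and conclude by lemma~\ref{lem-Om-prodfin}. You simply spell out in more detail why the monogenic separable case is \gui{présentée comme étale de base} (via the Bézout identity making $f'(x)$ invertible), which the paper leaves as \gui{le résultat est clair}.
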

%
\begin{proof}
Le résultat est clair si $\gA=\kx=\aqo\kX f$ avec $f$ \spl. On conclut ensuite en utilisant  le lemme \ref{lem-sanscarre}, le \thref{thEtalePrimitif} (point \textsl{2}) et le lemme \ref{lem-Om-prodfin}.
\end{proof}
%

\begin{lemma}[extension des scalaires] \label{lem-LEN-eds}~\\
 Soit $\gA$ une \klg \pf, et $\alpha:\gk\to\gk'$ un morphisme d'extension des scalaires. Soit $\gA'=\alpha\ist(\gA)\simeq \gk'\te_\gk\gA$.
\begin{enumerate}
\item Si $\gA$ est nette (resp. lisse, étale) sur $\gk$, il en va de même pour $\gA'$ sur $\gk'$.
\item Supposons que $\gk'$ est \fpte sur $\gk$. Si $\gA'$ est nette (resp. lisse, étale) sur~$\gk'$, il en va de même pour $\gA$ sur $\gk$.
\end{enumerate}
\end{lemma}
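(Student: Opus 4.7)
My plan hinges on the naturality of presentations, Jacobians, and determinantal ideals under extension of scalars, combined with faithful flatness. Fix a presentation $\gA = \aqo{\kXn}{\lfs}$ with transposed Jacobian $\rja$. Base change yields the natural presentation $\gA' \simeq \aqo{\gk'[\Xn]}{\alpha(\lfs)}$ whose transposed Jacobian $\rja'$ is the image of $\rja$ under $\gA \to \gA'$, and for each $k$ one has $\cD_{\gA',k}(\rja') = \gA' \cdot \alpha\bigl(\cD_{\gA,k}(\rja)\bigr)$; moreover, any \eco of $\gk$ maps to a \eco of $\gk'$. This handles point 1: if $\gA$ is nette, $\cD_{\gA,n}(\rja) = \gen{1}$ extends to $\cD_{\gA',n}(\rja') = \gen{1}$ (equivalently, Lemma \ref{lem-Om-eds} gives $\Om{\gk'}{\gA'} \simeq \gk' \otimes_\gk \Om{\gk}{\gA} = 0$). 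If $\gA$ is lisse (resp. étale), a witnessing \eco $\vr$ of $\gk$ yields a \eco $(\alpha(v_j))$ of $\gk'$, and the base-changed presentations of $\gA'[1/\alpha(v_j)]$ remain standard smooth (resp. standard étale), because invertibility of the relevant $s \times s$ minor (resp. determinant) is preserved.

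For point 2, the key input is that $\gA \to \gA'$ is faithfully flat (by base change of $\gk \to \gk'$ along $\gk \to \gA$). In the nette case, Lemma \ref{lem-Om-eds} gives $\Om{\gk'}{\gA'} \simeq \gA' \otimes_\gA \Om{\gk}{\gA}$, and vanishing descends to $\Om{\gk}{\gA} = 0$ by faithful flatness of $\gA'/\gA$. For the lisse and étale cases, I would introduce the ideal $\ff \subseteq \gk$ generated by the set $S$ of all $v \in \gk$ for which $\gA[1/v]$ admits a presentation as lisse (resp. étale) de base over $\gk[1/v]$; then $\gA$ is lisse (resp. étale) iff $\ff = \gk$. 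The hypothesis supplies the analogous equality $\ff' = \gk'$ in $\gk'$, and by point 1 one has $\alpha(S) \subseteq S'$. It then suffices to show $\alpha(\ff) \cdot \gk' = \ff'$, whereupon $\ff \cdot \gk' = \gk'$ and faithful flatness of $\gk \to \gk'$ forces $\ff = \gk$.

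The main obstacle is precisely this descent equality $\alpha(\ff) \cdot \gk' = \ff'$: given $v' \in S'$---a witness in $\gk'$ for a standard smooth (resp. étale) presentation of $\gA'[1/v']$, possibly involving fresh generators and relations introduced over $\gk'$---one must express $v'$ as a $\gk'$-combination of $\alpha$-images of elements of $S$. This is a genuine descent of presentation data, not merely of ideals, and constitutes the constructive heart of the argument: from a local standard presentation of $\gA'[1/v']$ over $\gk'[1/v']$, one must extract an equivalent local standard presentation already defined over $\gk$, refining the original presentation of $\gA$ as needed. Once this descent step is secured, the remainder reduces to a routine application of faithful flatness.
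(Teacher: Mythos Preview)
The paper states this lemma without proof (it is one of the results quoted from \cite{ACMC}), so there is no paper argument to compare against directly.

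Your treatment of point~1 and of the nette case in point~2 is correct and standard: the transposed Jacobian and its determinantal ideals commute with base change, comaximal families push forward, and Lemma~\ref{lem-Om-eds} together with faithful flatness of $\gA\to\gA'$ gives the descent of $\Om{\gk}{\gA}=0$.

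For the lisse and étale cases of point~2, however, you have a genuine gap, and you rightly name it yourself. The inclusion $\ff'\subseteq\alpha(\ff)\cdot\gk'$ is not a formality: a standard-smooth or standard-étale presentation of $\gA'[1/v']$ over $\gk'[1/v']$ may involve fresh generators and relations whose \coes live only in $\gk'$, and nothing in your outline explains how to replace such a presentation by one already defined over some $\gk[1/v]$ with $v\in S$. In the constructive framework of this paper one cannot simply invoke that smoothness descends along fpqc covers; an explicit procedure is required, typically passing through an intrinsic characterisation of smoothness (e.g.\ via projectivity of $\Om{\gk}{\gA}$ together with local splitting of the conormal sequence for a fixed presentation) that is visibly stable under faithfully flat descent. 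What you label ``the constructive heart of the argument'' is exactly the missing idea, not a postponed routine check; as written, the lisse and étale parts of point~2 remain an outline whose hardest step is left open.
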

\begin{plcc} \label{plcc.LEN}
Soit $\gA$ une \klg \pf, et $\sn\in\gk$ \com. Notons $\gk_i=\gk[1/s_i]$
Alors $\gA$ est nette (resp. lisse, étale) sur $\gk$ \ssi chacune des \algs $\gk_i\te_\gk\gA$ est nette (resp. lisse, étale) sur~$\gk_i$.  
\end{plcc}

\section{Un \tho décisif: \algs nettes sur un \cdi}\label{secthdecisif}

Avant d'aborder le \tho décisif nous avons besoin de résultats préliminaires sur la netteté pour les \algs \stfes sur un \cdi.
Nous commençons par un lemme crucial.

\begin{lemma} \label{lemNetstf}
Soit $\gK\to\gA$ une \alg nette et strictement finie sur un \cdi $\gK$. Alors $\gA$ est réduite.  
\end{lemma}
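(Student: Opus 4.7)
The plan is to build a separability idempotent in $\gB := \gA \otimes_\gK \gA$ and derive a reproducing identity that forces every nilpotent of $\gA$ to vanish.

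First, using the standard identification $\Om\gK\gA \simeq I/I^2$ (where $I := \ker\mu$ and $\mu: \gB \to \gA$ is multiplication), the hypothesis \emph{net} translates to $I = I^2$. Since $\gA$ is strictly finite, so is $\gB$, and $I$ is finitely generated. Writing $I = \gen{g_1, \ldots, g_m}$ together with $g_i = \sum_j y_{ij} g_j$ for some $y_{ij} \in I$, the determinant trick applied to $(\Id - Y)\vec g = 0$ yields an element $\iota \in I$ with $(1+\iota) I = 0$; a short check shows that $e := -\iota$ is an idempotent generating $I$. Setting $f := 1 - e$, one obtains $f^2 = f$, $\mu(f) = 1$, and the key commutation relation $(a \otimes 1)\,f = (1 \otimes a)\,f$ for every $a \in \gA$.

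Write $f = \sum_i u_i \otimes v_i$ and view $\gB$ as a free $\gA$-module of rank $n = \dim_\gK \gA$ via the first factor. Multiplication by $f$ is then an idempotent $\gA$-linear endomorphism of $\gB$; the commutation relation shows $\gB f = \gA \cdot f$, which $\mu$ identifies with the free rank-$1$ $\gA$-module $\gA$. Hence multiplication by $f$ is the projection onto a free rank-$1$ summand of $\gB$, so $\Tr_{\gB/\gA}(\text{mult}_f) = 1$. Computed in the basis $(1 \otimes e_j)_j$ derived from any $\gK$-basis $(e_j)$ of $\gA$, this same trace equals $\sum_i u_i \Tr_{\gA/\gK}(v_i)$, whence $\sum_i u_i \Tr_{\gA/\gK}(v_i) = 1$. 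Applying $\Id_\gA \otimes \Tr_{\gA/\gK}$ to the commutation relation then yields, for every $a \in \gA$, the reproducing formula
\[
a \;=\; a \sum_i u_i \Tr_{\gA/\gK}(v_i) \;=\; \sum_i u_i \Tr_{\gA/\gK}(a v_i).
\]

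Finally, if $a \in \gA$ is nilpotent, then each product $a v_i$ is nilpotent (commutativity), so multiplication by $a v_i$ is a nilpotent $\gK$-endomorphism of $\gA$ and $\Tr_{\gA/\gK}(a v_i) = 0$. The reproducing formula then gives $a = 0$, proving that $\gA$ is reduced. The main hurdle is the trace identity $\sum_i u_i \Tr_{\gA/\gK}(v_i) = 1$, which depends on recognizing $\gB f$ as a free rank-$1$ $\gA$-summand of $\gB$.
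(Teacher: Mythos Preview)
Your argument is correct. The paper does \emph{not} supply its own proof of this lemma: it immediately invokes the stronger Lemme~\ref{lemNetste} (valid over an arbitrary commutative base) from the external reference~\cite{KN-Quitte}, while remarking that a simpler direct proof of Lemme~\ref{lemNetstf} ought to be possible. Your proposal is exactly such a direct argument.

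Your route --- building the separability idempotent $f=1-e$ from $I=I^{2}$ via the determinant trick, then extracting the reproducing identity $a=\sum_i u_i\,\Tr_{\gA/\gK}(av_i)$ --- is the classical proof that a separable algebra over a field is reduced; in fact the reproducing formula already exhibits nondegeneracy of the trace form, so you are essentially recovering Lemme~\ref{lemNetste} in the field case. The step you flag as the main hurdle, $\Tr_{\gB/\gA}(\text{mult}_f)=1$, is indeed sound: since $\gB f$ is free of rank~$1$ over~$\gA$ with basis~$f$, the endomorphism $m_f$ has the form $x\mapsto \phi(x)\cdot f$ for an $\gA$-linear $\phi\colon\gB\to\gA$ with $\phi(f)=1$, and any such rank-one map has trace $\phi(f)=1$ in any basis of the free module~$\gB$, regardless of whether the complement $\gB e$ is itself free. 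The final step (trace of a nilpotent vanishes) is where you genuinely use that $\gK$ is a field, which is precisely why this argument is easier than the general Lemme~\ref{lemNetste}.
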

%
Le résultat beaucoup plus fort suivant (lemme \ref{lemNetste}) est donné dans l'article \cite[Une algèbre libre finie nette est traciquement étale, 2025]{KN-Quitte}. Une démonstration plus simple est sans doute possible pour le lemme \ref{lemNetstf}. C'est la raison pour laquelle nous nous basons dans la suite sur ce dernier lemme.

\begin{lemma} \label{lemNetste}
Soit $\gk\to\gA$ une \alg nette libre de rang fini $>0$ sur un anneau commutatif $\gk$. Alors $\gA$ est \eal.  
\end{lemma}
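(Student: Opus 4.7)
The strategy is to produce a \emph{separating idempotent} for $\gA$ over $\gk$ from the netteness hypothesis, and then to combine the structural decomposition of $\gB := \gA \otimes_\gk \gA$ along this idempotent with the freeness of $\gA$ to force the trace form to be non-degenerate. Setting $\mu : \gB \to \gA$ for the multiplication and $I = \ker \mu$, the canonical identification $I/I^2 \simeq \Om{\gk}{\gA}$ and the hypothesis $\Om{\gk}{\gA} = 0$ yield $I = I^2$. Since $\gA$ is finitely presented (being free of finite rank), $I$ is a finitely generated ideal of $\gB$, and the classical lemma that a finitely generated idempotent ideal in a commutative ring is generated by a \emph{unique} idempotent produces $f \in \gB$ with $f^2 = f$ and $I = \gen{f}$. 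Set $e = 1-f$; then $\mu(e) = 1$ and $(a \otimes 1 - 1 \otimes a)\, e = 0$ for every $a \in \gA$.

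Let $r$ be the rank of $\gA$ over $\gk$ and $(b_j)_{1 \leq j \leq r}$ a $\gk$-basis. View $\gB$ as a free $\gA$-module of rank $r$ via the first factor, with basis $(1 \otimes b_j)_j$; a direct computation yields $\Tr_{\gB/\gA}(a \otimes b) = a \cdot \Tr_{\gA/\gk}(b)$. Using $e^2 = e$ and the separating property, the map $\sigma : \gA \to \gB,\ a \mapsto (a \otimes 1) e$ is an $\gA$-algebra section of $\mu$; writing any $\omega \in \gB$ as $\sum c_j \otimes d_j$ and using $(1 \otimes d_j) e = (d_j \otimes 1) e$ one checks $\omega e = \sigma(\mu(\omega))$, so $\gB \cdot e = \sigma(\gA)$. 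Consequently $\gB = \sigma(\gA) \oplus \gB f$ as rings and as $\gA$-modules, with $e$ acting as the identity on $\sigma(\gA) \simeq \gA$ and as zero on $\gB f$, hence $\Tr_{\gB/\gA}(e) = 1$. Writing $e = \sum_{i=1}^N u_i \otimes v_i$ and applying the trace formula term by term gives
\[
  \sum_{i=1}^N u_i \, \Tr_{\gA/\gk}(v_i) \;=\; 1.
\]
Uniqueness of $f$ forces $e$ to be fixed by the swap $a \otimes b \mapsto b \otimes a$, so by commutativity one also obtains $\sum_i \Tr_{\gA/\gk}(u_i)\, v_i = 1$.

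The separating property ensures that the $\gk$-linear map $\Phi : x \mapsto \sum_i \Tr_{\gA/\gk}(u_i x)\, v_i$ is $\gA$-linear, hence equal to multiplication by $\Phi(1)$; but $\Phi(1) = \sum_i \Tr_{\gA/\gk}(u_i)\, v_i = 1$ by the previous step, so $\Phi = \Id_\gA$. Therefore
\[
  x \;=\; \sum_{i=1}^N \Tr_{\gA/\gk}(u_i x)\, v_i \qquad \text{for all } x \in \gA.
\]
Expanding $u_i = \sum_j U_{ij} b_j$ and $v_i = \sum_j V_{ij} b_j$ in coordinates and evaluating at $x = b_k$ produces the matrix identity $V^{\top} U \cdot \big(\Tr_{\gA/\gk}(b_j b_k)\big)_{j,k} = I_r$ in $M_r(\gk)$, exhibiting a left inverse of the Gram matrix of the trace form; since a left-invertible square matrix over a commutative ring is invertible, $\Disc_{\gA/\gk} = \det\big(\Tr_{\gA/\gk}(b_j b_k)\big)$ is invertible in $\gk$.

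The main obstacle will be the identity $\Tr_{\gB/\gA}(e) = 1$: it rests on the identification $\gB \cdot e = \sigma(\gA)$, which uses the separating property $(1 \otimes a) e = (a \otimes 1) e$ in an essential way to convert arbitrary elements of $\gB$ into images of $\sigma$. Everything else — the passage from $\Om{\gk}{\gA} = 0$ to a separating idempotent via the idempotent-ideal lemma, the trace formula for a tensor product, and the final matrix manipulation — is standard.
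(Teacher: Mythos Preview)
Your argument is correct. The passage from $\Om{\gk}{\gA}=0$ to a separability idempotent via $I/I^{2}\simeq\Om{\gk}{\gA}$ and the determinant trick is standard, and the trace computation goes through: the one point you flag as delicate, $\Tr_{\gB/\gA}(e)=1$, can in fact be obtained by a direct coordinate calculation that avoids any appeal to traces on projective summands. With the $\gA$-basis $(1\otimes b_j)_j$ of $\gB$ and the separating relation $(1\otimes b_j)e=(b_j\otimes 1)e$, writing $e=\sum_j e_j\otimes b_j$ with $e_j\in\gA$ gives that multiplication by $e$ has matrix $(b_j e_k)_{k,j}$, whose trace is $\sum_j e_j b_j=\mu(e)=1$. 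This sidesteps the need to know that $\gB f$ is free. The symmetry argument for $\tau(e)=e$ and the $\gA$-linearity of $\Phi$ are fine, and the final matrix identity $V^{\top}U\cdot G=I_r$ is exactly what is needed.

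The paper, however, does not give its own proof of this lemma: it is quoted from the separate preprint \cite{KN-Quitte}, whose title (\emph{Identités algébriques permettant de démontrer qu'une algèbre libre finie nette est traciquement étale}) indicates a proof by explicit polynomial identities rather than by the structural separable-algebra route you take. Your approach is the classical one (essentially the argument that a separable algebra which is finite projective has invertible discriminant); it is short and conceptual but relies on the ideal-theoretic description $\Om{\gk}{\gA}\simeq I/I^{2}$ and on Nakayama-type reasoning for $I=I^{2}$. The identity-based approach presumably trades this for a longer but more explicitly computational argument, which fits the constructive emphasis of the paper; in particular it would make the inverse of the discriminant directly computable from the presentation data without invoking the existence of an idempotent abstractly. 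Both reach the same conclusion, and your version would be perfectly acceptable in a classical treatment.
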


Le \tho de structure  suivant pour les \Klgs \eals sur un \cdi regroupe les résultats du \thref{corlemEtaleEtage} et du \thref{thEtalePrimitif} pour les points~\textsl{1} à \textsl{4}. 
\begin{theorem} \label{th-struc-ste-cdi} 
Soient $\gK$ un \cdi non trivial et $\gA$ une \Klg \stfe. \Propeq
\begin{enumerate}
\item $\gA$ est \eal.
\item $\gA$ est engendrée par des \elts \spls sur $\gK$.
\item Tous les \elts de $\gA$ sont \spls sur $\gK$.
\item $\gA$ est isomorphe à un produit fini de \Klgs $\aqo{\KX}{g_i}$ pour des \pols  unitaires \spls $g_i$.
\item $\gA$ est nette.   
\end{enumerate}
En outre, si $\gK$ est infini ou si $\gA$ est un corps, $\gA$ est isomorphe à une \alg $\gK[x]=\aqo \KX g$ où 
$g$ est un \pol unitaire \spl de $\KX$ (\thref{thEtalePrimitif} de l'\elt primitif). 
\end{theorem}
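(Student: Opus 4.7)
Le plan consiste à insérer la condition \textsl{5} dans la chaîne d'équivalences \textsl{1}--\textsl{4} déjà établie par les \thos \ref{corlemEtaleEtage} et \ref{thEtalePrimitif}. Pour la direction \textsl{1} $\Rightarrow$ \textsl{5}, je commencerais par appliquer le lemme \ref{lemAGN0} pour passer du caractère \eal à celui d'étale au sens de la définition \ref{defi-LNE}. Dans une présentation comme étale de base, la matrice $\rja$ est carrée à déterminant \iv, donc elle est \iv et $\Om\gK\gA=\Coker(\rja)=0$: $\gA$ est nette. Le cas étale \gnl en découle par le \plgc \ref{plcc.LEN}, la netteté étant par nature une propriété locale.

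Pour la direction \textsl{5} $\Rightarrow$ \textsl{1}, je m'appuierais d'emblée sur le lemme \ref{lemNetstf}, qui donne que $\gA$ est réduite. Le cas monogène est alors \imd: si $\gA=\gK[a]\simeq\aqo\KT{\mu_a}$, le \thref{thDerivUnivPF} fournit $\Om\gK\gA=\gA/\gen{\mu_a'(a)}$, et la netteté équivaut au fait que $\mu_a'(a)$ soit \iv dans $\gA$, c'est-à-dire à la relation $\gen{\mu_a,\mu_a'}=\gen{1}$ dans $\KT$. Ceci signifie exactement que $\mu_a$ est \spl, et le lemme \ref{lem-sanscarre}.1 entraîne que $\gA$ est \eal.

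Pour ramener le cas \gnl à ce cas monogène, j'étendrais les scalaires à $\gKp=\gK(u)$, un \cdi infini; cette extension étant \fpte, la netteté est préservée par le lemme \ref{lem-LEN-eds}, et le \discri commute avec l'extension des scalaires, ce qui permet de faire descendre le caractère \eal de $\gKp\te_\gK\gA$ vers $\gA$. Sur $\gKp$ infini, un argument d'\elt primitif adapté aux \algs réduites \zedes \stfes, combiné au lemme \ref{lem-Om-prodfin}, décompose $\gKp\te_\gK\gA$ en un produit fini de $\gKp$-\algs monogènes, auxquelles s'applique le cas monogène précédent. L'affirmation \gui{en outre} provient directement du \thref{thEtalePrimitif}.

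L'étape présentant le plus de difficulté est précisément cette réduction au cas monogène: il faudra rendre constructif l'argument d'\elt primitif pour une \alg réduite \stfe sur un \cdi infini, ce qui suppose d'exhiber effectivement un \sfio décomposant $\gKp\te_\gK\gA$ en produit de facteurs monogènes. La structure \zede fournie par le lemme \ref{lemma1Etale} et la factorisation effective sur un \cdi infini rendent ce passage possible, mais il demande une attention \algq particulière, et c'est là que réside tout le contenu non trivial de la démonstration.
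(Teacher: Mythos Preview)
Your implication \textsl{1} $\Rightarrow$ \textsl{5} is fine and matches the paper. The gap is in \textsl{5} $\Rightarrow$ \textsl{1}. You propose to extend scalars to $\gKp=\gK(u)$ and then invoke a primitive element theorem for \emph{reduced} strictly finite algebras over an infinite field in order to reach monogenic pieces. But such a theorem is false in characteristic $p$: over $\gK=\FF_p(s,t)$ (infinite), the field $\gA=\gK[s^{1/p},t^{1/p}]$ is reduced, strictly finite of degree $p^2$, yet every element $z\in\gA$ satisfies $z^p\in\gK$, so no element is primitive and $\gA$ is not a product of monogenic $\gK$-algebras. Of course this $\gA$ is not nette, but that is precisely the point: you would have to \emph{use} the netteté hypothesis inside the primitive-element step, and you do not explain how. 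Extending to $\gK(u)$ does nothing for this obstruction since $\gK(u)$ has the same characteristic and is still imperfect. As stated, your argument is circular: the only primitive element theorem available in the paper (\thref{thEtalePrimitif}) already assumes $\gA$ is \eal.

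The paper handles the characteristic $p$ case by a different reduction: it passes to the \emph{perfect closure} $\gK_1$ of $\gK$ (the discriminant is unchanged, so one may descend \eal afterwards). Over a perfect field, the minimal polynomial $f$ of any generator $x_i$ must satisfy $f'\neq 0$, since $f'=0$ would force $f(Y)=g(Y^p)=g_1(Y)^p$, contradicting minimality. Then either every generator is separable and one concludes by point \textsl{2}, or some $x_i$ has $\gen{f,f'}\neq\gen{1}$ with $f'\neq 0$, which yields a nontrivial factorisation $f=f_1f_2$, hence a nontrivial idempotent splitting $\gA$ into two pieces of strictly smaller rank; one then inducts on $\rg_\gK(\gA)$. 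The key idea you are missing is this passage to the perfect closure, which is what makes the inductive splitting go through without assuming separability in advance.
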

\note Dans le point \textsl{4}, en posant $g:=g_1\cdots g_r$, si les $g_i$ sont deux à deux étrangers,  on~a un \iso canonique $\gA\simeq \aqo \KX g\simeq \prod_{i=1}^r\aqo\KX{g_i}$.\eoe
\begin{proof} Il reste à démontrer l'\eqvc avec le point \textsl{5}.

\noindent Si $\gA$ est \eal, elle est étale (lemme \ref{lemAGN0}), à fortiori  nette.

\noindent Supposons $\gA$ nette. Démontrons que $\gA$ est \eal.
L'\alg $\gA$ est \zedre d'après le lemme \ref{lemNetstf}.

\noindent Voici tout d'abord une \demo utilisant le principe du tiers exclu.

\noindent Si $\gA$ est un \cdi, on écrit $\gA\simeq\Aqo\KX f=\Kx$ pour un \pol \ird~$f$.
Si $f'\neq 0$ alors $\gen{f,f'}=\gen{1}$. Dans ce cas l'\alg $\gA$ est nette et \eal.
Si $f'=0$, la \cara est égale à un nombre premier~$p$ et $f(X)=g(X^p)$ pour un \pol $g$.  Donc $\Om{\gK}{\gA}$ est libre de rang $1$ avec pour base $\rd x$. L'\alg $\gA$ n'est pas nette. Le cas corps discret avec $f'=0$ est donc  impossible.

\noindent Si $\gA$ est n'est pas un \cdi, comme elle est réduite, elle n'est pas connexe. Il y a un \idm $e\neq 0,1$  et $\gA\simeq\aqo\gA e\times \aqo\gA {1-e}$. Les deux \algs sont des \algs nettes $\gA_1$ et $\gA_2$ avec $\rg_\gK(\gA_i)< \rg_\gK(\gA)$ ($i=1,2$). On conclut par \recu sur le rang de $\gA$.

\noindent Voici maintenant un décryptage \cof de la \demo classique. 

\noindent Notons $m=[\gA:\gK]$ la dimension de $\gA$ comme \Kev, notée aussi $\rg_\gK(\gA)$. Si $m=0$ ou $1$, $\gA$ est \eal. Supposons $m>1$.
\\
Si $p=_\gK0$ pour un $p\leq m$, notons $\gK_1$ la clôture parfaite de $\gK$ 
et~$\gA_1$ l'\eds de $\gA$ à $\gK_1$.
Modulo l'identification de $\gK$ à un sous-corps de $\gK_1$ on~a $\Disc_{\gA/\gK}=\Disc_{\gA_1/\gK_1}$, donc $\gA$ est \eal sur $\gK$ \ssi $\gA_1$ est \eal sur $\gK_1$. Nous pouvons donc supposer \spdg que $\gK$ est un corps parfait. 
\\
On a $\gA=\Kxn$, avec $n\geq 1$ et les~$x_i\notin \gK$  \agqs sur~$\gK$. Si les~$x_i$ sont \spls sur~$\gK$\footnote{Pour le décider il suffit de calculer le \discri du \polmin de $x_i$}, l'\alg est \eal d'après le point \textsl{2}.\\ 
Sinon, par exemple avec $x_1$ non \spl, on calcule le \polmin~$f(Y)$ de $x_1$. On a $f'\neq 0$ car sinon $f(Y)=g(Y^p)$ pour $p=_\gK0$, et comme~$\gK$ est parfait $g(Y^p)=g_1(Y)^p$ pour un $g_1\in\KY$, donc $g_1(x_1)=0$
avec $\deg(g_1)<\deg(f)$.
\\
Comme $1\notin\gen{f,f'}$ on peut factoriser $f$ sous la forme $f=f_1f_2$ avec $\gen{f_1,f_2}=\gen{1}$, $\gen{f_1,f'}=\gen{1}$, et $\deg(f_i)< \deg(f)$ ($i=1,2$).
On a alors un \idm $e\neq 0,1$ tel que $\gen{e}=\gen{f_1(x_1)}$ et
$\gen{1-e}=\gen{f_2(x_1)}$. L'\alg~$\gA$ est donc le produit de deux \algs $\gA_1$ et $\gA_2$ avec $\rg_\gK(\gA_i)< \rg_\gK(\gA)$ ($i=1,2$). On conclut par \recu sur le rang de $\gA$.
\end{proof}

Voici maintenant le \tho décisif convoité.
\begin{theorem}[\algs nettes sur un \cdi] \label{th-net-cdi-etale}~\\
Sur un \cdi\footnote{Le \tho est \egmt valable dans le cas où $\gK$ est seulement supposé \zedr. On l'obtient en utilisant la machinerie \lgbe \elr des anneaux \zedrs décrite dans \cite[section IV-8]{ACMC}.}, toute \Klg nette est \stfe, \eal.\\
En résumé, une \Klg \pf sur un \cdi est nette \ssi elle est \eal.  
\end{theorem}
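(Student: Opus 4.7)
L'idée est de se ramener au cas \stfe puis d'invoquer le \thref{th-struc-ste-cdi}. Comme celui-ci donne déjà l'\eqvc entre \gui{nette} et \gui{\eal} parmi les \Klgs \stfes, le point central est de prouver: \emph{une \Klg nette \pf sur un \cdi~$\gK$ est automatiquement \stfe.} En écrivant $\gA=\aqo{\gK[\Xn]}{\lfs}$ et $J=\JJ_\uX(\uf)$, l'hypothèse de netteté signifie $\cD_{\gA,n}(J)=\gen{1}_\gA$: il existe donc des mineurs $m_1,\ldots,m_k$ d'ordre $n$ de $J$ et $\lambda_j\in\gA$ avec $\sum_j \lambda_j m_j = 1$ dans $\gA$.

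Chaque $m_j$ provient du choix de $n$ lignes $I_j\subseteq\lrbs$ de $J$. Nous présenterions la localisation $\gA[1/m_j]$ comme un quotient de
\[
\gC_j \;:=\; \gK[\Xn,Z]\,\big/\,\gen{\{f_i : i\in I_j\},\; Z\widetilde m_j - 1},
\]
où $\widetilde m_j\in\gK[\Xn]$ relève $m_j$. La matrice jacobienne $(n+1)\times (n+1)$ de $\gC_j$ par rapport à $(\Xn,Z)$ a pour déterminant $\widetilde m_j^{\,2}$, \iv dans $\gC_j$ (puisque $\widetilde m_j\cdot Z=1$); donc $\gC_j$ est une \agN, et $\gA[1/m_j]$ est un quotient d'une \agN.

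Le lemme clé, à établir en sous-résultat, est: \emph{sur un \cdi, toute \agN est \stfe.} Nous procéderions par récurrence sur le nombre $m$ de variables. Pour $m=1$, une \agN est $\gK[Y]/\gen{h}$ avec $h'$ \iv dans le quotient, ce qui force $\gen{h,h'}=\gen{1}$ dans $\gK[Y]$; donc $h$ est non nul et \spl (cf.\ le lemme \ref{lem-sanscarre}) et le quotient est \stfe. Pour $m\geq 2$, l'\iv de $\det J$ combinée à de l'élimination (résultants, ou élimination successive à l'aide des dérivées partielles dont une certaine combinaison est \iv) devrait produire un \pol non nul \spl dans $\gK[Y_m]$ appartenant à l'idéal, permettant de se ramener à $m-1$ variables. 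Cette étape d'élimination constitue l'obstacle technique principal de la \demo.

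Admettant le sous-résultat, chaque $\gC_j$ est \stfe, donc son quotient $\gA[1/m_j]$ l'est aussi. L'identité de comaximalité $\sum_j\lambda_j m_j = 1$ rend injective l'application naturelle $\gA\to\prod_j\gA[1/m_j]$: si $a\in\gA$ s'annule dans chaque $\gA[1/m_j]$, alors $m_j^{N_j}a=0$ pour un certain $N_j$, et le développement multinomial de $(\sum_j\lambda_j m_j)^{N}=1$ pour $N\geq\sum_j N_j$ annule $a$ par un argument combinatoire élémentaire. Donc $\gA$ s'injecte comme sous-\Kev d'un \Kev de dimension finie, donc est de dimension finie, autrement dit \stfe. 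Le \thref{th-struc-ste-cdi} conclut alors que $\gA$ est \eal.
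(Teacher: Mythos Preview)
Your reduction is correct up to the point you yourself flag as the obstacle. The outer structure --- reduce to \gui{nette $\Rightarrow$ \stfe} via le \thref{th-struc-ste-cdi}, then localise at comaximal $n\times n$ minors of $J$ so that each $\gA[1/m_j]$ is a quotient of an \agN, and finally embed $\gA\hookrightarrow\prod_j\gA[1/m_j]$ --- is sound. But the sub-result \gui{toute \agN sur un \cdi est \stfe} is where the entire difficulty lives, and you have not proved it. The base case $m=1$ is fine; the inductive step is not. Your hypothesis is only that $\det J$ is \iv \emph{in the quotient}, with no control on degrees in $\KuX$, so there is no evident resultant or elimination scheme that produces a nonzero \pol in $\gK[X_m]\cap\gen{\uf}$; the one-variable argument worked because $h'$ \iv modulo $h$ forces $\gen{h,h'}=\gen{1}$ in the \emph{principal} ring $\gK[Y]$, and there is no analogue in several variables. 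In effect your reduction replaces the general nette case by a special case that is just as hard.

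The paper bypasses this induction. It computes the Noether dimension $r$ of~$\gA$ (decidable over a \cdi) and derives a contradiction from $r>0$. After Noether normalisation one has $\gB=\gK[\Yr]\hookrightarrow\gA$ with $\gA$ a \Bmo \pf; localising at $S=\gB\setminus\{0\}$ yields a nonzero finite-dimensional $\gL$-\alg $\gC$ over $\gL=\gK(\Yr)$, still with $\Om\gK\gC=0$ by Lemma~\ref{propOmkBS}. The exact sequence of Lemma~\ref{propOmABC} then forces $\Om\gL\gC=0$, so $\gC$ is \eal over the infinite field~$\gL$ and hence $\gC\simeq\aqo{\gL[X]}{f}$ with $f$ \spl (primitive element). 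The decisive step is to \emph{extend} the nonzero $\gK$-\dvn $\partial_1=\partial/\partial Y_1:\gL\to\gL$ to all of $\gC$ by setting $\partial_1 x:=-(\partial_1 f)/f'(x)$, which is licit precisely because $f'(x)\in\gC^\times$. This produces a nonzero element of $\Der\gK\gC\gC$, contradicting $\Om\gK\gC=0$. That derivation-extension trick is the missing idea in your plan.
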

Ainsi toute \Klg étale au sens de la \dfn \gnle \ref{defi-LNE} est \eal au sens de la \dfn \ref{defi1Etale}.

Notons que vu le \thref{th-struc-ste-cdi} il suffit de démontrer qu'une \Klg nette est \stfe. D'après   \cite[\tho IV-8.16]{ACMC}, cela revient à dire que si une \Klg \pf $\gA$ est nette, sa dimension de Noether est $\leq 0$.

\smallskip Il nous reste donc à démontrer le \tho suivant.

\begin{theorem} \label{thAlgNetteSruCdi}
Soient $\gK$ un \cdi,
 $f_1$, \dots, $f_s\in\KXn$, \hbox{et $\gA=\Kux=\aqo{\KuX}{\uf}$} la \Klg quotient. Si $\Omega_{\gA/\gK}=0$ alors la dimension de Noether du \syp est $\leq 0$. 
\end{theorem}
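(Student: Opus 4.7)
The plan is to show that $\gA$ is \stfe over $\gK$, which (as noted just after the statement, via \cite[Théorème IV-8.16]{ACMC}) is exactly the assertion that the Noether dimension is $\leq 0$; as a bonus, once \stfe is known, \ref{lemNetste} automatically yields the \eal conclusion. I proceed by induction on $n$. The base case $n = 1$ is immediate: $\gK[X_1]$ being Bézout, $\gen{f_1, \ldots, f_s} = \gen{f}$ with $f = \pgcd(f_1,\ldots,f_s)$, and the hypothesis $\Om\gK\gA = 0$ translates to $\gen{f, f'} = \gen{1}$ in $\gK[X_1]$, forcing $f$ nonzero and \spl; after normalization to unitaire, $\gA = \gK[X_1]/\gen{f}$ is \stfe.

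For $n \geq 2$, the hypothesis $\cD_{\gA,n}(\rja) = \gen{1}$ yields a writing $1 = \sum_I a_I m_I$ in $\gA$, where $I$ ranges over $n$-subsets of $\{1, \ldots, s\}$ and $m_I = \det(J_I)$ is the corresponding $n \times n$ Jacobian minor. Since strict finiteness over $\gK$ (equivalently, zero-dimensionality given that $\gA$ is \tf) can be tested locally on the comaximal family $\{m_I\} \subseteq \gA$, and since each $\gA[1/m_I]$ is a quotient of the ``présentée comme étale de base'' algebra $\gB_I := \gK[\uX][1/\tilde m_I]/\gen{f_i : i \in I}$ (a quotient of a \stfe algebra being again \stfe over a \cdi), the problem reduces to showing: any $\gB = \gK[\uX][1/D]/\gen{g_1, \ldots, g_n}$ with $D = \det(\partial g_i/\partial X_j)_{i,j} \in \gB^\times$ is \stfe over $\gK$.

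For this reduced case, I apply a secondary induction on $n$. After a change of variables --- linear if $\gK$ is infinite, Nagata-style in general (handled uniformly by extension of scalars to $\gK(T)$ and descent via the faithfully flat half of \ref{lem-LEN-eds}) --- we may assume $g_n$ is unitaire in $X_n$ of some degree $d \geq 1$. Pseudo-Euclidean division by $g_n$ then exhibits $\gB$ as a free module of rank $d$ over a subalgebra $\gB_0$, itself a localization of $\gK[X_1, \ldots, X_{n-1}]$ modulo the $n - 1$ resultants $h_i := \Res_{X_n}(g_i, g_n)$ for $i < n$. \emph{The main obstacle} of the plan is a chain-rule / Jacobi-identity computation showing that the $(n-1) \times (n-1)$ Jacobian of $(h_1, \ldots, h_{n-1})$ is invertible in $\gB_0$; once this is verified, $\Om\gK{\gB_0} = 0$, the inductive hypothesis gives $\gB_0$ \stfe, and $\gB$ follows \stfe as a finite $\gB_0$-module. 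An alternative that sidesteps the explicit Jacobian descent is: once the finiteness of $\gB$ as a $\gK$-module is secured via the $\gB_0$-module structure, \ref{lemNetste} is invoked directly to obtain the \eal conclusion on $\gB$.
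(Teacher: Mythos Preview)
Your reduction to the standard-étale situation $\gB = \gK[\uX][1/D]/\gen{g_1, \ldots, g_n}$ via comaximal minors is fine, but the secondary induction breaks at the resultant step. The subalgebra $\gB_0 \subseteq \gB$ generated by $x_1, \ldots, x_{n-1}$ is \emph{not} presented by the resultants $h_i = \Res_{X_n}(g_i, g_n)$, nor is $\gB$ in general free of rank $d$ over it. Take $n=2$, $g_1 = X_1^2-1$, $g_2 = X_2^2-1$ over a field of characteristic $\neq 2$: then $h_1 = \Res_{X_2}(g_1,g_2) = (X_1^2-1)^2$, the ring $\gK[X_1]/\gen{h_1}$ is non-reduced, and $h_1' = 4X_1(X_1^2-1)$ is nilpotent there, so your Jacobian hypothesis fails in every nonzero localization. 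Meanwhile the genuine $\gB_0 = \gK[x_1] \subseteq \gB$ equals $\gK[X_1]/\gen{X_1^2-1}$, a strict quotient of the resultant ring. In general the kernel of $\gK[X_1,\ldots,X_{n-1}] \to \gB_0$ is the elimination ideal $\gen{g_1,\ldots,g_n} \cap \gK[X_1,\ldots,X_{n-1}]$, which contains the $h_i$ but is typically larger, and you have no control over a presentation of it, let alone its Jacobian. Your closing ``alternative'' does not escape the problem: it still requires $\gB_0$ finite over $\gK$ by induction, hence still needs the missing Jacobian condition on some presentation of~$\gB_0$.

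The paper takes a completely different and much shorter route. Assuming the Noether dimension is $r \geq 1$, after a Noether position the localization $\gC = S^{-1}\gA$ at $S = \gK[Y_1,\ldots,Y_r] \setminus \{0\}$ is a nonzero \stfe algebra over $\gL = \gK(Y_1,\ldots,Y_r)$, and $\Omega_{\gC/\gK} = S^{-1}\Omega_{\gA/\gK} = 0$ by Lemma~\ref{propOmkBS}. The exact sequence of Lemma~\ref{propOmABC} then forces $\Omega_{\gC/\gL} = 0$, so by Theorem~\ref{th-struc-ste-cdi} the algebra $\gC$ is \eal over~$\gL$; since $\gL$ is infinite, the primitive-element theorem gives $\gC = \gL[x] = \aqo{\gL[X]}{f}$ with $f$ separable. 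Now the nonzero $\gK$-derivation $\partial_1 := \partial/\partial Y_1$ of $\gL$ extends to $\gC$ by the single assignment $\partial_1 x := -(\partial_1 f)/f'(x)$ (legitimate because $f'(x) \in \gC^\times$), producing a nonzero element of $\Der\gK\gC\gC$ and contradicting $\Omega_{\gC/\gK} = 0$. No induction, no elimination theory.
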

%
\begin{proof} Puisque la dimension de Noether $r$ se calcule, on peut raisonner par l'absurde et supposer que $r>0$. 
On pose 
\begin{itemize}
\item $\gB=\gK[\Yr]$, $S=\gB\setminus \so0$;
\item  $\gL=S^{-1}\gB=\gK(\Yr)$, $Z_i=Y_{r+i}$ ($i=1,\dots,t=n-r$);
%
%
\item   $\gC=S^{-1}\gA$.
\end{itemize}
Notons que $\gA=\gB[Z_1,\dots,Z_t]/\fa$ où $\fa$ est un \itf tel que $\fa\cap S=\emptyset$. On~a alors (voir le lemme \ref{propOmkBS}) 
\[
0=S^{-1}\Omega_{\gA/\gK}=\Omega_{\gC/\gK}.
\]  
Comme
$\gA$ est un \Bmo \pf et comme  $\fa\cap S=\emptyset$, l'\alg $\gC$ est un \Lev non nul libre de dimension finie. Les morphismes $\gK\to\gL\to\gC$ 
donnent la suite exacte classique de $\gC$-modules (voir le lemme~\ref{propOmABC})
\[
\gC\otimes_\gK\Omega_{\gL/\gK} \lora \Omega_{\gC/\gK} \lora \Omega_{\gC/\gL}\lora 0 
\]
Puisque $\Omega_{\gC/\gK}=0$ on obtient $\Omega_{\gC/\gL}=0$. Comme $\gL$ est un \cdi et que $\gC$ est un \Lev de dimension finie, $\gC$ est \eal sur~$\gL$. Comme $\gL$ est infini, $\gC=\gL[x]=\aqo {\gL[X]} f$ avec $f$ un \pol unitaire \spl.\\
Considérons alors la dérivée partielle par rapport à $X_1$: $\partial_1:\gL\to\gL$. C'est une $\gK$-dérivation non nulle. Montrons que nous pouvons la prolonger en une $\gK$-dérivation $\partial_1:\gC\to\gC$.
Il suffit pour cela de définir $\partial_1 x$, lequel est soumis à l'unique condition 
\[
(\partial_1 x)\,f'(x)+\partial_1 f=0.
\]
Par exemple si $f(X)=X^3+\alpha X^2+\beta X+ \gamma$, on doit réaliser la condition 
$$(\partial_1 x)(3x^2+2\alpha x+\beta)+ x^2\partial_1\alpha+ x\partial_1\beta+\partial_1\gamma=0
.$$
Or  $f'(x)$ est inversible dans $\gC=\gL[x]$; il suffit donc de poser  
$\partial_1 x:=-(\partial_1 f)/f'(x)$.
\\
Ainsi on~a construit une $\gK$-dérivation non nulle $\partial_1:\gC\to\gC$. Cela implique que \hbox{$\Omega_{\gC/\gK}\neq 0$}.
En bref, les hypothèses $\Omega_{\gA/\gK}= 0$ et $r>0$ sont incompatibles.
\end{proof}
%


\addcontentsline{toc}{section}{Références}


\end{document}